\documentclass[12pt,a4paper]{amsart}

\usepackage[margin=1.1in]{geometry}
\parindent=0pt
\usepackage{MnSymbol,accents,mathrsfs,upref}
\usepackage{hyperref}
\usepackage[english]{babel}

\expandafter\let\expandafter\oldproof\csname\string\proof\endcsname
\let\oldendproof\endproof
\renewenvironment{proof}[1][\proofname]{  \oldproof[\bfseries #1]}{\oldendproof}

\theoremstyle{plain}
\newtheorem{Theorem}{Theorem}
\newtheorem{Proposition}{Proposition}[section]
\newtheorem{Lemma}[Proposition]{Lemma}
\newtheorem{Corollary}[Proposition]{Corollary}
\newtheorem*{Newton-Lagrange}{Newton-Lagrange interpolation formula}

\theoremstyle{definition}
\newtheorem{Definition}{Definition}
\newtheorem{Hypothesis}{Hypothesis}

\theoremstyle{remark}
\newtheorem{Remark}[Proposition]{Remark}
\newtheorem{Example}{Example}
\newtheorem*{Notations}{Notations}

\numberwithin{equation}{section}
\newcommand{\beq}{\begin{equation}}
\newcommand{\eeq}{\end{equation}}
\newcommand{\sm}{\smallskip}
\newcommand{\md}{\medskip}

\newcommand{\ug}{\,=\,}
\newcommand{\eqv}{\,\equiv\,}
\newcommand{\mn}{\,\le\,}
\newcommand{\mg}{\,\ge\,}
\newcommand{\noi} {\noindent}
\newcommand{\de}{\partial}
\newcommand{\+}{\,+\,}

\newcommand{\pv}{\,;\,}
\newcommand{\gb}{\goodbreak}

\newcommand{\R}{\mathbb{R}}

\newcommand{\wh}[1]{\widehat{#1}}
\newcommand{\wt}[1]{\widetilde{#1}}

\let\Im\undefined
\DeclareMathOperator{\Im}{Im}
\newcommand{\transpose}[1]{{#1}^T}

\DeclareMathOperator{\Span}{Span}

\let\Mat\undefined
\DeclareMathOperator{\Mat}{Mat}

\newcommand{\PP}{\mathsf{P}}
\newcommand{\LL}{\mathsf{L}}
\newcommand{\RR}{\mathsf{R}}
\renewcommand{\SS}{\mathsf{S}}
\newcommand{\rr}{\mathsf{r}}

\newcommand{\taud}[2][d]{\tau^{(#1)}_{#2}}
\newcommand{\taumax}{\tau_{\mathrm{max}}}

\newcommand{\uu}[1]{\mathsf{u}_{#1}}
\newcommand{\vv}[1]{\mathsf{v}_{#1}}

\newcommand{\uuu}{\mathsf{u}}
\newcommand{\vvv}{\mathsf{v}}
\newcommand{\fff}{\mathsf{f}}

\newcommand{\wwm}[1]{\mathsf{w}_{#1}}

\newcommand{\sA}{\mathscr{A}}
\newcommand{\sB}{\mathscr{B}}

\newcommand{\sQ}{\mathscr{Q}}

\newcommand{\sE}{\mathscr{E}}

\newcommand{\sN}{\mathcal{N}}

\newcommand{\e}{\varepsilon}

\begin{document}

\title[The Cauchy Problem for properly hyperbolic equations \dots]
   {The Cauchy Problem\\
    for properly hyperbolic equations\\
    in one space variable}
\author{Sergio Spagnolo \textit{\&} Giovanni Taglialatela}
\dedicatory{Dedicated to our friend Enrico Jannelli}

\address{   Dipartimento di Matematica,
   Universit\`{a} di~Pisa,
   Largo Bruno Pontecorvo 5,
   56127 Pisa,
   Italy}
\email{sergio.spagnolo@unipi.it}
\address{
   Dipartimento di Economia e Finanza,
   Universit\`{a} di~Bari ``Aldo Moro'',
   Largo Abbazia S. Scolastica,
   70124 Bari,
   Italy}
\email{giovanni.taglialatela@uniba.it}
\subjclass[2010]{35L30}
\keywords{Weakly hyperbolic equations, Cauchy problem, symmetrizer}
\maketitle

\tableofcontents

\section{Introduction}

In this paper we~deal with the~non characteristic Cauchy Problem
\begin{align}  \qquad \mathcal{L}\,u
&     \ug f(t,x)  ,  &
   &  t\in\R^+ , \ x\in\R  ,
      \label{E-scalar}  \\
\qquad \partial_t^j u(0,x)
&  \ug  \varphi_j(x),
\quad \ 0 \le j \le m-1  ,  &
   &  x\in\R ,
         \label{E-data}
\end{align}
where  $\mathcal{L}$\, is~an operator of~order~$m\ge2$ in $\,\R_t\times \R_x$, with smooth coefficients, of the form
\begin{equation} \label{eq:L}
 \mathcal{L}\,u  \eqv \LL(t,x\pv\partial_t,\partial_x)\,u
  \ug  \PP(x\pv\partial_t,\partial_x)\,u
           \,-\,  \sum_{d=0}^{m-1} \RR_d(t,x\pv\partial_t,\partial_x)\,u.
\end{equation}

\begin{Hypothesis} \label{H-H} \
The~principal symbol
\begin{equation} \label{eq:princ-symb-0}
\PP(x\pv\tau,\xi)
  \equiv \tau^m
        \+ a_1(x) \, \tau^{m-1} \, \xi
        \+  \dotsb
        \+ a_m(x) \, \xi^m
\end{equation}
depends only on the~spatial variable $x$,
and is~\emph{weakly hyperbolic},
i.e.,
\begin{equation} \label{E-Pm}
\PP(x\pv\tau,\xi)
  \ug  \prod_{j=1}^m\, \bigl(\tau-\tau_j(x)\,\xi\bigr)
\end{equation}
where the~characteristic roots~$\tau_j(x)$
are real functions, not necessarily distinct.
\end{Hypothesis}

The lower order terms
\begin{equation}\label{eq:lot}
\RR_d(t,x\pv\tau,\xi)
  \eqv \sum_{k=0}^d\, r_{d,k}(t,x)\,\tau^{d-k}\,\xi^k , \qquad d=0,1,\dotsc, m-1,
\end{equation}
are homogeneous polynomials in~$(\tau,\xi)$ of~degree~$\,d$,
depending on both variables. \sm

\begin{Notations}
Throughout this paper we shall write shortly:
\begin{equation} \label{eq:P}
\PP(x\pv\tau) \eqv \PP(x\pv\tau,1), \quad \RR_d(t,x\pv\tau) \eqv \RR_d(t, x\pv\tau,1)
\end{equation}
\end{Notations}
\gb

Let $\,\mathcal{S}_0$ be the~initial line $\{t=0\}$.
We say that
the Cauchy Problem~\eqref{E-scalar}-\eqref{E-data}
is~\emph{locally well-posed} (in~$\mathscr{C}^\infty$)
at a point $y_0\in \mathcal{S}_0$
if, for every neighborhood~$\mathcal{I}$
of~$y_0$ in~$\R^+\times\R$,
there exists a neighborhood~$\mathcal{I}'\subseteq \mathcal{I}$,
with $\mathcal{I}'\cap \mathcal{S}_0 = \mathcal{I}\cap \mathcal{S}_0$,
such that
for all $\varphi_j \in \mathscr{C}^\infty(\mathcal{I}\cap \mathcal{S}_0)$
and~$f \in \mathscr{C}^\infty(\mathcal{I})$
there is~a unique solution
$u\in \mathscr{C}^\infty(\mathcal{I}')$.
When, for each strip $\mathcal{I}=[0,T[\times \R$,
we can take $\mathcal{I}'=\mathcal{I}$, we say that the Problem is
\emph{globally well-posed}.
\sm

In the \emph{strictly hyperbolic} case, i.e.,
when~the~characteristic roots are distinct,
the Cauchy Problem~\eqref{E-scalar}-\eqref{E-data}
is well-posed with no further assumption
on the~lower order terms.
On the other side, in case of multiple characteristic roots
some additional conditions are needed,
both on the~principal symbol and on the~lower order terms.
\sm

In~\cite{ST-JHDE} we considered the case of an \emph{homogeneous operator},
that is when $\,\RR_d\equiv0\,$ for all~$d$, and we~proved the
well-posedness for~\eqref{E-scalar}-\eqref{E-data}
under the~following condition:

\begin{Hypothesis} \label{H-CO} \
There exists a constant  $M>0$
such that
\begin{equation} \label{E-CO}
\tau_j^2(x) \+ \tau_k^2(x)
  \mn M\,\bigl(\tau_j(x) \,-\,\tau_k(x)\bigr)^2 \, ,
\qquad
1\le j<k\le m .
\end{equation}
\end{Hypothesis}

Note that, by~Newton's Theorem on symmetric polynomials,
\eqref{E-CO} can be explicitly expressed
in terms of~the coefficients~$a_j(x)\,$
of the principal symbol
(see~\cite[Remark~1.2]{ST-JHDE} for some examples).
\sm

In order to~treat the case of non homogeneous operators,
some  conditions on~the lower order terms
are needed,
even if~the~coefficients are constant.
Indeed,
in the~constant coefficients case
the following condition on the~full symbols~$L(\tau,\xi)\,$ is~\emph{necessary and sufficient}
for the Problem~\eqref{E-scalar}-\eqref{E-data}
to be well-posed in~$\mathscr{C}^\infty$~(see \cite{Garding} \cite{Hormander} \cite{Svensson}): 
\begin{equation} \label{E-Garding}
\text{there exists~$\,C>0\,$ such that
   $\,L(\tau,\xi)\ne0\,$ if~$\,\xi\in\R\,$ and $\,|\Im\tau|>C$} \, .
\end{equation}

This  is the classical G\aa rding condition. 
Various conditions equivalent  to~\eqref{E-Garding} have been found.
We consider here the~formulation given by~Peyser~\cite{Peyser-1963}.
In order to~explain this condition, we give
the following definition:

\begin{Definition} \label{D-pd}
If $\,\PP(x\pv\tau) \equiv  \bigl(\tau-\tau_1(x)\bigr) \cdots
\bigl(\tau-\tau_m(x)\bigr)$
is a hyperbolic polynomial,
we~say that a polynomial $\,\RR(x\pv\tau)$ of~degree~$\,\le m-1\,$
has a~\emph{proper decomposition w.r.t.~$\,\PP(x \pv \tau)$}
if~there exist some functions $\,\ell_k\in L^\infty(\R)$ such that
\begin{equation} \label{E-dec ell 0}
\RR(x\pv\tau)
  \ug  \sum_{k=1}^m \ell_k(x) \, \PP_{\wh{k}}(x\pv\tau) ,
  \qquad \text{for all} \ x\in\R,
\end{equation}
where
 \begin{equation} \label{eq:reduced}
\PP_{\wh{k}}(x\pv\tau)
  \eqv  \prod_{\substack{j=1,\dotsc,m \\ j\ne k}}
      \bigl(\tau-\tau_j(x)\bigr) , \quad \ k=1,\ldots,m.
\end{equation}
The polynomials $\,\PP_{\wh{k}}\,$ are  called the~\emph{reduced}
(or~\emph{incomplete}) polynomials of~$\,\PP$.
\end{Definition}
\sm

\begin{Example} \label{Ex-dtPP}
The polynomials
$\,\partial_\tau \PP(x\pv\tau)$ and $\,\partial_x \PP(x\pv\tau)$
have a proper decomposition w.r.t.~$\PP(x\pv\tau)$.
The first assertion is obvious, indeed
\begin{equation} \label{E-Pp Ph}
\partial_\tau \PP(x\pv\tau)
  \ug  \sum_{h=1}^m \, \PP_{\wh{h}}(x\pv\tau) \, .
\end{equation}
On the other side, recalling the~Lemma of~Bron\v{s}te\u{\i}n
on  Lipschitz continuity of~the characteristic roots
of an~hyperbolic polynomial
(\cite{Bronstein-1979};
cf.~\cite{Wakabayashi-1986} \cite{Mandai-1985} \cite{Tarama-2006}),
we find
\begin{equation} \label{E-Px Ph}
\partial_x \PP(x\pv\tau)
  \ug -\sum_{h=1}^m \, \tau_h'(x) \, \PP_{\wh{h}}(x\pv\tau) \, ,
\end{equation}
so that $\,\partial_x \PP$ has a proper decomposition w.r.t.~$\PP$.
\end{Example}
 \gb

\begin{Remark} \label{R-1}
If, for some fixed $\,x\in\R$, the polynomial $\,\PP(x\pv\tau)$
has simple (real) roots, the $\ell_k$s in~\eqref{E-dec ell 0}
are uniquely determined by Newton-Lagrange formula:
\[
\ell_j(x)
  =  \frac{\RR\bigl(x\pv\tau_j(x)\bigr)}{\PP_{\wh{k}}\bigl(x\pv\tau_j(x)\bigr)} \, .
\]  
\end{Remark}

In this paper, recalling \eqref{eq:P}, we assume:

\begin{Hypothesis} \label{H-ph} \
For each $\,d=1,\dotsc,m-1$, the polynomials $\,\RR_d(t,x\pv\tau)$
have a proper decomposition w.r.t.~$\,\partial_\tau^{m-1-d}\, \PP(x\pv \tau)$.
\end{Hypothesis}

This means that
$\,\RR_d(t,x\pv\tau)$
is a linear combination of~the reduced polynomials of the polynomial
$\,\partial_\tau^{m-1-d} \,\PP(x\pv\tau)$,
with coefficients  uniformly bounded  with respect to $\,t, x$.
\sm

Following Peyser~\cite{Peyser-1963}
we call~\emph{properly hyperbolic} the operators  verifying Hypothesis~\ref{H-ph}.
\sm

In the constant coefficients case, Peyser~\cite{Peyser-1963} derived suitable~energy estimates
ensuring the $\mathcal{C}^\infty$ well-posedness  for the operators that verify Hypothesis~\ref{H-ph}.
Dunn~\cite{Dunn} extended the~result of~Peyser
proving that Hypothesis~\ref{H-ph}
is~sufficient to ensure the well-posedness also if~the~lower order terms
have variable coefficients.
On the other side Wakabayashi~\cite{Wakabayashi-1980}
proved that the~proper hyperbolicity is~also necessary.
\sm \gb

All of~the previously mentioned results concern the case of
operators with constant coefficient principal part. 
In this paper we consider operators
with variable coefficients principal part
satisfying Hypothesis~\ref{H-CO}, and we prove:

\begin{Theorem} \label{T-1}
Let
$\,a_k \in \mathscr{C}^\infty(\R)$
for $\,1\le k\le m$,   $\,r_{d,k} \in L^\infty\bigl([0,T] \times \R\bigr)$
for $\,0\le k\le d\le m-1$.
Then, if~Hypothesis~\ref{H-H}, \ref{H-CO} and~\ref{H-ph}
are satisfied,
the~Cauchy Problem~\eqref{E-scalar}-\eqref{E-data}
is locally well-posed.
If in addition  the $\,a_k(x)$'s are  bounded on $\R$,
we have  the global well-posedness.
\end{Theorem}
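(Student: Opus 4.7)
The plan is to reduce the scalar equation to a first-order pseudo-differential system $\partial_t U = A(t,x,\partial_x)U + F$, with $U$ the classical vectorisation $(\partial_x^{m-1}u,\, \partial_x^{m-2}\partial_t u,\ldots,\partial_t^{m-1}u)$, whose principal symbol at frequency $\xi$ is conjugate to the companion matrix of $\PP(x\pv\tau,\xi)$, and in which each $\RR_d$ contributes a zero-order matrix perturbation $B(t,x,\partial_x)$.

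The core of the argument is the construction, for each small $\varepsilon>0$, of a \emph{quasi-symmetrizer} $Q_\varepsilon(x,\xi)$ adapted to Hypothesis~\ref{H-CO}, in the spirit of what we carried out in~\cite{ST-JHDE} for the homogeneous case. The natural building blocks of $Q_\varepsilon$ are the reduced polynomials $\PP_{\widehat{k}}(x\pv\tau,\xi)$: up to normalisation these are the columns of the adjugate used to diagonalise the companion matrix when the roots are separated, and they survive in the coalescing-roots regime as long as~\eqref{E-CO} holds. One expects $Q_\varepsilon$ to satisfy a coercivity bound $c\,\varepsilon^N I \le Q_\varepsilon \le C I$ together with an approximate symmetry $Q_\varepsilon A + A^* Q_\varepsilon = R_\varepsilon$ with $\|R_\varepsilon\|\le C\varepsilon^{-M}$, the exponents $N,M$ being controlled precisely by condition~\eqref{E-CO}.

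Hypothesis~\ref{H-ph} then enters in a strikingly clean way: since at each $(t,x)$ the symbol of $B$ is a bounded linear combination of exactly the reduced polynomials used to assemble $Q_\varepsilon$, the matrix $Q_\varepsilon B + B^* Q_\varepsilon$ stays bounded \emph{uniformly in~$\varepsilon$}, so the Peyser-type lower order terms produce no further loss. Plugging this into the weighted $L^2$ estimate for $\langle Q_\varepsilon U, U\rangle$ and then optimising $\varepsilon$ against the frequency, as in~\cite{ST-JHDE}, yields a $\mathscr{C}^\infty$ a~priori estimate with finite loss of derivatives. Uniqueness follows by duality on the adjoint problem (which has the same structure); existence by a standard approximation scheme --- regularise the roots (for instance replacing $\PP$ by the strictly hyperbolic $\PP-\varepsilon^2\xi^m$ \`a la Jannelli), mollify the $r_{d,k}$ in~$t$ and~$x$, and pass to the limit via the uniform estimate. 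Finite speed of propagation bounded by $\max_j\sup_K|\tau_j(x)|$ on compact sets~$K$ gives the local statement; when the $a_k$'s are bounded on~$\R$ one has $\max_j\sup_\R|\tau_j|<\infty$, and the same energy estimate propagates on each strip $[0,T]\times\R$, yielding the global well-posedness.

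The main obstacle is the compatibility step: showing that the Peyser decomposition, which is purely pointwise in $(t,x)$ with only $L^\infty$ coefficients $\ell_k(t,x)$, interacts well with a symbolic calculus whose $Q_\varepsilon$ unavoidably involves the $\tau_j(x)$ and hence, by Bron\v{s}te\u{\i}n, only Lipschitz regularity in~$x$. This forces a careful double mollification (in~$x$ at some scale $\varepsilon^a$ and in~$t$), with a sharp bookkeeping of how the mollified reduced polynomials pair with the $\ell_k(t,x)$ so that the uniform-in-$\varepsilon$ bound on $Q_\varepsilon B + B^* Q_\varepsilon$ survives the regularisation --- this is precisely the point where the variable-coefficient extension of the Peyser-Dunn energy identity must be forced through.
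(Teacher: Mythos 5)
The central step of your argument is asserted rather than proved, and it is precisely the hard point. You keep the classical $m$-vector $(\partial_x^{m-1}u,\dotsc,\partial_t^{m-1}u)$, a single companion matrix for $\PP$, and a quasi-symmetrizer $Q_\e$ built from the reduced (and higher reduced) polynomials of $\PP$, and you then claim that Hypothesis~\ref{H-ph} makes $Q_\e B+B^*Q_\e$ bounded uniformly in $\e$ because the symbol of $B$ is a combination of ``exactly the reduced polynomials used to assemble $Q_\e$''. But Hypothesis~\ref{H-ph} says something different: for $d<m-1$ the polynomial $\RR_d$ decomposes properly w.r.t.\ $\partial_\tau^{m-1-d}\PP$, whose reduced polynomials have degree $d$ and are \emph{not} among the reduced or bi-reduced polynomials of $\PP$ entering $Q_\e$; relating the two families is exactly the delicate interlacing/Peyser material (cf.~\eqref{E-Peyser}, Lemmas~\ref{L-Fisk}--\ref{L-interrev}) and cannot be dismissed as a clean plug-in. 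In addition, the obstacle you yourself flag --- reconciling the purely pointwise $L^\infty$ Peyser coefficients with a symbolic calculus whose symbols are only Lipschitz in $x$ via a ``double mollification'' --- is left unresolved, and with $r_{d,k}$ merely in $L^\infty\bigl([0,T]\times\R\bigr)$ this is not a technicality: the uniform-in-$\e$ bound on $Q_\e B+B^*Q_\e$ is the statement that would have to be proved, and your sketch does not prove it. Two further gaps: estimating space derivatives (needed for $\mathscr{C}^\infty$ well-posedness by energy methods) requires showing that the differentiated equation again satisfies Hypothesis~\ref{H-ph}, which is nontrivial (it is Proposition~\ref{P-de k b} in the paper, resting on Lemmas~\ref{L-tau dec}--\ref{L-interrev}); and your proposed regularisation $\PP-\e^2\xi^m$ does not in general yield hyperbolic, let alone strictly hyperbolic, approximations (perturbing only the $\xi^m$ coefficient of a degenerate hyperbolic polynomial can create complex roots), nor do you verify that the approximating operators satisfy Hypothesis~\ref{H-ph} uniformly in $\e$.

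For comparison, the paper avoids all of these difficulties by a different reduction: the unknown $U$ collects \emph{all} derivatives of order $\le m-1$ in blocks $\uu{d}$, the matrix $\sA$ is block diagonal with the Sylvester matrices of the monic derivatives $\PP^{(d)}$ of~\eqref{eq:monic}, and the symmetrizer is the direct sum~\eqref{eq:Q-storto} of the \emph{standard} Jannelli symmetrizers of the $\PP^{(d)}$ --- no $\e$, no frequency optimisation, no loss to absorb. Proposition~\ref{P-der P} (via Peyser's refinement, Lemma~\ref{L-pq}) shows that condition~\eqref{E-CO} is inherited by every $\PP^{(d)}$, so each block is nearly diagonal; and Hypothesis~\ref{H-ph} then enters literally through Lemma~\ref{E-equi}, because the block of $\sB$ containing $\RR_d$ is paired with $Q_{d+1}$, the symmetrizer of the very polynomial $\PP^{(d+1)}$ with respect to which $\RR_d$ decomposes. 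Since $\sB(t,x)$ acts by pointwise multiplication, $L^\infty$ coefficients cause no calculus problem at all, which is exactly what your route cannot achieve without the missing mollification argument. Existence is obtained with Nuij's map $\sN_\e^{m-1}$, and Wakabayashi's root estimates (Lemma~\ref{L-waka}) are what guarantee Hypothesis~\ref{H-ph} uniformly for the approximations. You would need to supply substitutes for each of these ingredients before your outline could count as a proof.
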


\begin{Remark}
An analogous result holds true for operators
with principal symbol depending only on the~time variable.
Condition~\eqref{E-CO} was considered
for the~first time in~\cite{ColombiniOrru1999},
where the~$\,\mathcal{C}^\infty$ well-posedness was proved
for homogeneous equations with analytic-type
coefficients depending only on time.
The result was then extended in~\cite{KinoshitaSpagnolo}
to operators with non-analytic coefficients
and in~\cite{T-2013}
and~\cite{GarettoRuzhansky-2013} to non-homogeneous operators.
\end{Remark}

\begin{Remark}
Our result is restricted to operators in one space dimension,
since Hypothesis~\ref{H-CO} is no longer sufficient
for $\mathcal{C}^\infty$ well-posedness in the higher-dimensional case.
Indeed, in~\cite{BernardiBove-2006}
they prove that the Cauchy problem for the equation
\[
\partial_{tt}^2 u
   \+  a_1 (x,y,\partial_x,\partial_y) \partial_t u
   \+  a_2 (x,y,\partial_x,\partial_y) u \ug 0 \, ,
\]
where
\[
a_1(x,y\pv\xi,\eta)
  \ug  -2x\eta \, ,
\qquad
a_2(x,y\pv\xi,\eta)
  \ug  -\xi^2 + x^3 \eta^2 \, ,
\]
is not locally well-posed at the origin,
although this equation fulfills the analogue
of Hypothesis~\ref{H-CO} near $(x,y) = (0,0)$,
viz.
\[
a_1^2(x,y\pv\xi,\eta)
  \mn  C\,\Big\{(a_1^2(x,y\pv\xi,\eta)
                 - 4\,a_2(x,y\pv\xi,\eta)\Bigr\} ,
\quad
\text{for all $(x,y\pv\xi,\eta) \in \R^2 \times \R^2$} \, .
\]
\end{Remark}
\gb

\textbf{Sketch of the proof of Theorem \ref{T-1}.} \

Firstly, we transform equation \eqref{E-scalar} into a first order system.
Then, resorting to Jannelli's symmetrizer~\cite{Jannelli1989} \cite{Jannelli2009}
and to the Hypotheses~B and~C, we get an apriori energy estimate.
Finally, using  a form  of  the Lemma of Nuij~\cite{Nuij},
we approximate \eqref{E-scalar} by a sequence of strictly hyperbolic equations,
thus obtaining a sequence of approximating solutions which  converge to a solution of  \eqref{E-scalar}.
\sm

To make the proof  more understandable,
in the next section we shall briefly recall the \emph{homogeneous case},
i.e., that of equation~\eqref{eq:L} with $\,\RR_d \eqv 0$ for all $\,d$.
This case was already treated in~\cite{ST-JHDE}.
Then, in \S 3 and \S 4, we shall treat the general case.
\gb

\begin{Remark}
The (standard) symmetrizer is a useful tool to study homogeneous operators
\cite{ST-JHDE} \cite{JannelliTaglialatela-2011}
or~operators in which the behaviour is essentially
determined by the principal symbol
\cite{Nishitani2021}, \cite{NishitaniPetkov1}, \cite{NishitaniPetkov2}.
 A~more powerful tool to treat operators
with non smooth coefficients~\cite{JannelliTaglialatela-2014},
Levi conditions~\cite{T-2013}, \cite{GarettoRuzhansky-2013},
or non-linear problems~\cite{Spagnolo-2005}, \cite{ST-CPDE},
is given by the so-called \emph{quasi-symmetrizer},
introduced in~\cite{DAnconaSpagnolo1998} and extensively studied in~\cite{Jannelli2009}
(see~also~\cite{T-2013} and~\cite{Nishitani2020}).
\sm

The quasi-symmetrizer is a perturbation of the standard symmetrizer:
\[
Q_\e \ug Q_0 \+ \e^2 Q_1 \+ \dotsb\ + \e^{2(m-1)} Q_{m-1}
\]
where $\,Q_0$ is the standard symmetrizer of $\,\PP$,
$\,Q_1$ is the sum of the symmetrizers of the reduced polynomials~$\,\PP$
(cf.~\eqref{eq:reduced}),
$\,Q_2$ is the sum of the symmetrizers of the bi-reduced polynomials~$\,\PP$
(see~\eqref{E-bired} below)
and so on.
\sm

In this paper we use a different approach to modify the symmetrizer:
we~construct a symmetrizer
\[
\sQ \ug Q_0 \oplus Q^{(1)} \oplus \dotsb \oplus Q^{(m-1)} \, .
\]
where $\,Q_0$ is still the standard symmetrizer of $\,\PP$,
whereas $\,Q^{(1)}$ is the standard symmetrizer of $\,\partial_\tau \PP$,
$\,Q^{(2)}$ is the standard symmetrizer of $\,\partial_\tau^2 \PP$,
and so on.
\end{Remark}
\gb

\section{The homogeneous case  and the Jannelli's symmetrizer}

In this section we  assume $\,\RR_d \eqv 0\,$ for all $\,d=0, \cdots, m-1$.
Thus, equation \eqref{E-scalar} becomes
\sm
\begin{equation} \label{E-hom.eq.}
 \PP(x\pv\partial_t,\partial_x)\,u \eqv \partial_t^mu
        \,+\,  a_1(x)\,\partial_t^{m-1}\partial_x u\,
        +\,  \dotsb
        \,+\, a_m(x) \,\partial_x^{m}u
 \ug f(t,x).
\end{equation}
\sm
Now we introduce the~$m$-vectors:
\begin{equation}\label{eq:u-f}
\uuu(t,x)  :\ug
\begin{pmatrix}
      \partial_x^{m-1} u \\
      \partial_t \partial_x^{m-2} u \\
      \vdots \\
      \partial_t^{m-1} u,
      \end{pmatrix} \,,
       \qquad
  \fff(t,x):\ug  \begin{pmatrix}
      \,0\, \\
   \, 0\,  \\
      \vdots \\
    \,f\,
      \end{pmatrix}\,,
\end{equation}
and the $\,m\times m$ \emph{Sylvester matrix}
associated to~the~polynomial $\,\PP(x,\tau)$:
\begin{equation} \label{E-Sylvester}
A(x)
  \eqv  \begin{pmatrix}
       0 & 1 & \cdots & 0  \\
       \vdots &  & \ddots & \vdots  \\
         & &  & 0  \\
       0 & \cdots & 0 & 1  \\
       -a_m(x) &  -a_{m-1}(x) & \cdots & -a_1(x)
      \end{pmatrix} \,
              \end{equation}
\sm
Clearly, the equation \eqref{E-hom.eq.} is equivalent to the system
\begin{equation} \label{E-system m}
\uuu_t
  \ug  A(x) \, \uuu_x
      \,+\, \fff(t,x) .
\end{equation}
To get an~apriori estimate for the solutions to this system,
we resort to the  energy function based on~the
Jannelli's symmetrizer~\cite{Jannelli1989} \cite{Jannelli2009}.
We recall that a \emph{symmetrizer} of a $\,m\times m\,$  matrix $\,A$ is a  $m\times m\,$ matrix~$\, Q$
such that
\[
 Q^T \ug  Q \mg\, 0, \qquad ( Q A)^T \ug  Q A.
\]
Correspondingly to a given symmetrizer,    we define  the energy of a  solution  of \eqref{E-system m} as:
\begin{equation}  \label{E:energy}
E(t,\uuu) : \ug  \frac{1}{2}
      \int_{I_t} \bigl( Q(x)\, \uuu(t,x) \,, \,\uuu(t,x)\bigr)\,dx,
\end{equation}
where $\,I_t$ is a suitable real interval, and we look for an apriori estimate.
\sm

In the  case that the matrix $ A$ (hence also $\, Q\,$) is constant, and $\,I_t =\R$, by differentiating~\eqref{E-system m} in $t\,$ we find
\[
E'(t,\uuu)   \ug  \frac{1}{2}
      \int_{\R}\big\{ \bigl( Q  A\uuu_x, \uuu\,\bigr) + \bigl( Q \uuu,  A\,\uuu_x\bigr)\big\}\,dx
      \ug  \int_{\R}\ \de_x\bigl( Q  A\uuu, \uuu\,\bigr) \,dx \ug 0
\]
for any  solution $\,\uuu(t,\cdot)$ with compact support in $\,\R$.
Thus, $\,E(t,\uuu)$ keeps constant.
\sm

In the general case, in order to get an estimate of $\,E'(t,\uuu) $ in term of $E(t,\uuu) $, we have to~estimate   $\,Q A$ and $\,\de_x(\,Q A)$ in terms of $\,Q$. Moreover, in order to derive an estimate of~$\uuu$, we need some {\it coercivity\,} of the symmetrizer $\,Q(x)$.
The Jannelli's symmetrizer, under Hypothesis~\ref{H-CO},
enjoys of all these properties (Proposition~\ref{P-Q} below).
\sm

We now recall the construction of such a symmetrizer
(cf.~\cite{Jannelli1989}, \cite{Jannelli2009}):
First of all we introduce a notation.
\sm

\begin{Notations}
Given a polynomial $\,p(\tau) \eqv c_0+c_1\tau+\cdots+c_d\,\tau^d\,$
of~degree~$d$, we define the row vectors:
\begin{align} \label{eq:vec}
\mathsf{vec} (p)
       & :\ug (c_d,\dotsc,c_1,c_0) \in\, \R^{d+1}
        \\
  \label{E-wk}
  \wwm{k}(x)
 & :\ug  \mathsf{vec}\,\bigl(\PP_{\wh{k}}(x\pv\cdot)\bigr) \in \,\R^m, \qquad k=1,\dotsc,m,
\end{align}
where the $\,\PP_{\wh{k}}\,$ are the reduced polynomials of~$\,\PP$,
and we introduce the $m\times m$ matrix
\[
 W(x)
  :\ug   \begin{pmatrix}
     \ \wwm{1}(x) \ \\ \vdots \\ \ \wwm{m}(x) \    \end{pmatrix}.
     \]
\end{Notations}

If $\,\{\tau_1(x) \le \cdots \le \tau_m(x)\}$ are the eigenvalues of the matrix $\,A(x)$,
that is the roots of~$\,\PP(x,\tau)=0$,
from the above definition we easily derive (see \cite{ST-JHDE}) that,
up to~dilatations, the row vector $\,\wwm{k}(x)$ is the \emph{unique left eigenvector} of~$\, A(x)$
corresponding to~the eigenvalue $\tau_k(x)$, i.e.,
\begin{equation} \label{eq:eigenvectors}
\wwm{k}(x)  A(x) \ug \tau_k(x)\, \wwm{k}(x) .
\end{equation}

\begin{Definition} \label{D-J}
We call \emph{Jannelli's} (or {\it standard\,}){\,\it  symmetrizer}  of $\,A(x)$ the $m\times m$ matrix
\begin{equation} \label{eq:Jsym}
 Q(x)
  :\ug  W^T(x) \,  W(x).
\end{equation}
\end{Definition}

Note the identity:
\begin{equation} \label{E-wcdot}
\bigl( Q(x)\, \vvv,\vvv\bigr)
  \ug  \sum_{k=1}^m \,\bigl(\,\wwm{k}(x) , \vvv\,\bigr)^2 ,
\qquad
\text{for all} \  \vvv\in\R^m, \ x\in \R.
\end{equation}
\gb

Later on, we shall use the following  characterisation  of the
proper  decomposition:

\begin{Lemma} \label{E-equi}
A~polynomial $\,\ \RR(x\pv\tau)$ of~degree~$\,\le m-1\,$
has a~proper decomposition w.r.t. $\,\PP(x\pv \tau)$
\textup(see \eqref{eq:princ-symb-0}-\eqref{eq:P}\textup)
if and only if there exists some constant $\,C_0$ for which
\begin{equation} \label{E-Ru}
\bigl(\mathsf{vec}\bigl(\RR(x \pv \cdot)\bigr), \vvv\,\bigr)^2
  \mn  C_0 \, \bigl( Q(x)\, \vvv , \vvv\bigr) \, ,
\qquad
\text{\rm for all} \, \ \vvv\in\R^m, \ x\in \R.
\end{equation}
\end{Lemma}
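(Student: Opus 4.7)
The strategy is to recast both sides of the equivalence as linear algebra on $\R^m$. Applying $\mathsf{vec}$ to a putative decomposition \eqref{E-dec ell 0} gives $\mathsf{vec}(\RR(x\pv\cdot))\ug\sum_k\ell_k(x)\wwm{k}(x)$, while by \eqref{E-wcdot} the right--hand side of \eqref{E-Ru} equals $\|W(x)\vvv\|^2$, where $W(x)$ is the matrix with rows $\wwm{k}(x)$. Setting $\xi(x)\eqv\mathsf{vec}(\RR(x\pv\cdot))$, the lemma reduces to the uniform--in--$x$ algebraic statement: $\xi(x)$ is a uniformly bounded linear combination of the $\wwm{k}(x)$'s if and only if $(\xi(x),\vvv)^2\mn C_0\,\|W(x)\vvv\|^2$ for every $\vvv\in\R^m$.

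The forward direction is immediate from Cauchy--Schwarz: if $\xi(x)\ug\sum_k\ell_k(x)\wwm{k}(x)$ with $\ell_k\in L^\infty(\R)$, then
\[
(\xi(x),\vvv)^2\mn\Bigl(\sum_k\ell_k(x)^2\Bigr)\sum_k(\wwm{k}(x),\vvv)^2 \mn C_0\,(Q(x)\vvv,\vvv),
\]
with $C_0\eqv\sum_k\|\ell_k\|_{L^\infty}^2$.

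For the converse I would fix $x$, write $W\ug W(x)$, $\xi\ug\xi(x)$, and notice that $(\xi,\vvv)^2\mn C_0\|W\vvv\|^2$ forces $\xi\perp\ker W$; hence $\xi\in\mathrm{Im}\,W^T$, so that a pointwise decomposition $\xi\ug\sum_k\ell_k(x)\wwm{k}(x)$ does exist. To upgrade existence to a uniform bound I would select the minimum--norm solution $\ell(x)\in\mathrm{Im}\,W(x)$, write $\ell\ug Wu$ for some $u\in\R^m$, and test the hypothesis on $\vvv\ug u$:
\[
\|\ell\|^4\ug(W^T\ell,u)^2\ug(\xi,u)^2\mn C_0\,\|Wu\|^2\ug C_0\,\|\ell\|^2,
\]
yielding $\|\ell(x)\|^2\mn C_0$ uniformly in $x$.

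The main obstacle I anticipate is \emph{measurability} of this choice. The rank of $W(x)$ drops exactly where characteristic roots of $\PP$ collide, and there the Moore--Penrose pseudoinverse is discontinuous, so one cannot expect $\ell(x)$ to be continuous. However, the entries of $W(x)$ depend continuously on the Lipschitz roots $\tau_j(x)$ (Bron\v{s}te\u{\i}n), and $(\,\cdot\,)^+$ is a Borel function of its matrix argument, so $\ell(x)\eqv(W(x)^T)^+\xi(x)$ is Borel measurable; combined with the pointwise bound $\|\ell(x)\|^2\mn C_0$ this yields $\ell_k\in L^\infty(\R)$, as required.
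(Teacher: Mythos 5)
Your proof is correct, and while your forward implication and the first half of your converse (the observation that \eqref{E-Ru} forces $\rr(x)=\mathsf{vec}\bigl(\RR(x\pv\cdot)\bigr)$ to be orthogonal to $\ker W(x)$, hence to lie in $\Span\{\w{1}(x),\dotsc,\w{m}(x)\}$) coincide with the paper's argument, your way of extracting the uniform bound on the coefficients is genuinely different. The paper fixes $k$, groups the indices $h$ with $\tau_h(x)=\tau_k(x)$ (so that $\w{h}=\w{k}$ and, after a slight modification, $\ell_h=\ell_k$), and tests \eqref{E-Ru} on a dual vector $\overline{\vvv}$ satisfying $(\w{k},\overline{\vvv})=1$ and $(\w{h},\overline{\vvv})=0$ for the remaining indices; this yields $|\ell_k(x)|\le\sqrt{C_0/\nu}$, at the price of the multiplicity bookkeeping and of the (implicit) check that such a $\overline{\vvv}$ exists. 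You instead take the minimum-norm solution $\ell(x)=W(x)u$ of $W(x)^T\ell=\rr(x)$ and test \eqref{E-Ru} on $\vvv=u$, which gives $\sum_k\ell_k(x)^2\le C_0$ in two lines with no case analysis on multiplicities (and, incidentally, the minimum-norm solution automatically satisfies $\ell_h=\ell_k$ whenever $\w{h}=\w{k}$, by uniqueness of the minimizer). Your argument also addresses the measurability of $x\mapsto\ell_k(x)$ — needed for $\ell_k\in L^\infty(\R)$ but passed over in silence in the paper — via Borel measurability of the Moore--Penrose pseudoinverse, e.g.\ as the pointwise limit of the continuous maps $A\mapsto(A^TA+\delta I)^{-1}A^T$ applied to the measurable data $W(x)$, $\rr(x)$. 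The only thing the paper's route buys that yours does not is the marginally sharper pointwise bound $\sqrt{C_0/\nu}$, which is never used.
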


\begin{proof}
If we define the vector
\[
\rr(x): \ug \mathsf{vec}\bigl(\RR(x \pv \cdot\bigr),
\]
and  recall~\eqref{E-dec ell 0}-\eqref{E-wk}, we see that $\,\RR$ has a~proper decomposition w.r.t.~$\,\PP$ if and only if there are 
some $\,\ell_k\in L^\infty(\R)$ for which
\begin{equation} \label{eq:r-wk}
\rr(x) \ug 
    \sum_{k=1}^m \ell_k(x) \, \wwm{k}(x), \qquad \text {for all}\ x\in \R.
\end{equation}
\gb 
Hence, if  $\,\RR$ has a~proper decomposition w.r.t.~$\,\PP$, by \eqref{eq:r-wk}   and~\eqref{E-wcdot} we derive:
\[
\bigl(\rr(x),\, \vvv\,\bigr)^2
  \mn   \sum_{k=1}^m \,\ell_{k}^{\,2}(x) \cdot \sum_{k=1}^m \bigl(\wwm{k}(x) \,, v\bigr)^2
  \mn  C \, \bigl( Q(x) \vvv , \vvv\bigr),
\]
with C $\,\ug \sum \|\ell_k\|_{L^\infty}^2$.
\sm

Conversely, let us assume~\eqref{E-Ru}. If we define, for each fixed $\,x\in \R$, the linear space
\[
Z(x):\ug \Span\bigl\{\wwm{1}(x),\dotsc,\wwm{m}(x)\bigr\} \subset \R^m
\]
and note that  $\,\bigl( Q(x)\vvv, \vvv\bigr)=0\,$
if  $\,\vvv\in Z(x)^\bot$ (see~\eqref{E-wcdot}), we get
\[
\bigl(\rr(x) , \vvv \bigr) \ug 0
\quad \text{for all }\, \vvv \in Z(x)^\bot.
\]
\sm
Consequently we see that $\,\rr(x) \in Z(x)$. Thus, there are~$\,\ell_1(x), \ldots, \ell_m(x)$ such that
\begin{equation} \label{E-23}
\rr(x) \ug \sum_{k=1}^m \ell_k(x) \wwm{k}(x),
\end{equation}
where, after a slight mdification, we can assume that  $\,\ell_h=\ell_k$ whenever  $\,\wwm{h}=\wwm{k}$.
\sm

We prove that
\begin{equation} \label{E-27}
\bigl|\ell_{k}(x)\bigr| \mn \sqrt{C_0\,} \, , \qquad
\text{for each fixed} \  k  \ \text{and} \ x.
\end{equation}
Indeed, if the eigenvalue $\,\tau_k(x)$ has multiplicity $\,\nu$ $ (1\le \nu \le m)$,
let  $\,h_1,\dotsc,h_\nu\,$ be the indices $\,h$ for which
$\,\tau_h({x}) = \tau_{k}({x})$,
and consequently
$\wwm{h}({x}) = \wwm{k}({x})$ and $\,\ell_h(x)=\ell_k(x)$.
Therefore  taking a vector~$\,\overline{\vvv}\in \R^m$
which satisfies $\,\bigl(\wwm{k}({x}) ,  \overline{\vvv}\bigr) =1$,
and
$\,\bigl(\wwm{h}({x}) ,  \overline{\vvv}\bigr) =0$
if~$\,h \notin \{ h_1,\dotsc,h_\nu \}$,
and recalling~\eqref{E-23}, we~find:
\begin{equation} \label{E-24}
\bigl(\rr(x)\,,\, \overline{\vvv}\bigr)
  \ug  \sum_{h=1}^m \ell_h({x}) \bigl(\wwm{h}({x}) \,,\, \overline{\vvv}\bigr)
  \ug  \nu \, \ell_{k}({x}) \, .
\end{equation}

On the other side, by~\eqref{E-wcdot} we~know that
\begin{equation} \label{E-25}
\bigl( Q({x}) \overline{\vvv}, \overline{\vvv}\bigr)
   \ug  \sum_{h=1}^m \,\bigl(\wwm{h}(\overline{x}) ,  \overline{\vvv}\bigr)^2
   \ug  \nu,
\end{equation}
thus, inserting~\eqref{E-24} and~\eqref{E-25} in~\eqref{E-Ru},
we get
\[
\bigl|\ell_{k}(x)\bigr| \mn \sqrt{\frac{C_0}{\nu}\,} \ .
\]
Hence~\eqref{E-27} holds true.
\end{proof}
\gb

Next, putting
\[
\taumax :\ug \max_{x\in \R} \max_{1\le j\le m} |\tau_j(x)|\,, \qquad \psi_k
   :\ug \sum_{1\le j_1<\dotsb<j_k\le m}
       \tau_{j_1}^2\, \dotsb\ \tau_{j_k}^2\,,
       \]
 we define the  diagonal matrices:
  \begin{equation} \label{eq:Lambda-Psi}
  \Lambda(x)
  :\ug  \begin{pmatrix}
      \tau_1& 0 & \cdots & & 0 \\
      0 & \tau_2 & 0 & & \vdots \\
      \vdots & & \ddots & \ddots& \\
      0 &\cdots & &\tau_{m-1}& 0 \\
      0 &  \cdots &&  & \tau_m
     \end{pmatrix} \, , \qquad
     \Psi(x)
  :\ug  \begin{pmatrix}
    \psi_{m-1}& 0 & \cdots & & 0  \\
    0 &  \psi_{m-2}& 0 &  &  \vdots  \\
          \vdots & &\ddots & \ddots & & \\
      0 & \cdots& & \psi_1 & 0  \\
      0 & \cdots & &  & 1
      \end{pmatrix} \, .
  \end{equation}
\gb \sm
By \eqref{eq:eigenvectors} we have
\[
 W(x)  A(x) \ug \Lambda(x)  W(x),
\]
whence
\[
 Q  A \ug  W^T \, \Lambda\, W.
\]
Thus, the matrices   $\, Q$ and $\, Q A$ are both symmetric, so  $\, Q$ is a symmetrizer of $\, A$.
\sm

In the following Proposition we resume the  properties of~the Jannelli's symmetrizer \eqref{eq:Jsym}. We  write for brevity $ \, Q \equiv  Q(x), \,  A \equiv A(x),\, \Psi\equiv \Psi(x), \,\tau_j \eqv \tau_j(x)$.

\begin{Proposition} \label{P-Q}
The entries of the symmetrizer $\, Q(x)$ in \eqref{eq:Jsym} are polynomials in the coefficients of~$\,\PP(x \pv\tau)$, in particular  are smooth functions of $x$.
\sm

It holds:
\begin{equation} \label{E-detQ}
\det \, Q
  \ug  \prod_{1\le j<k\le m} \bigl(\tau_j-\tau_k\bigr)^2  .
\end{equation}
Moreover, if the~entries of $\,A(x)$  verify
\begin{equation} \label{E-adk}
\bigl\|a_j^{(k)}\bigr\|_{L^\infty(\R)}
  \le  L
   <   +\infty \, ,
\qquad
\text{for  \ $j,k=0,\dotsc, m$},
\end{equation}
then there exists a positive constant $\,\Gamma_0 \eqv \Gamma_0(m,L)$
such that, for any $\,\vvv\in \R^m$, it holds:
\begin{align}
\label{E-QA}
\bigl|\, Q A\,\vvv,\vvv\,\bigr)\,\bigr|
&  \mn  \taumax\, |\,\vvv\,|^2 ,  \\
 \label{E-QAp}
\bigl|\,\bigl( Q A)'\,\vvv,\vvv\,\bigr)\,\bigr|
&  \mn  \Gamma_0 \, \bigl(\Psi\vvv,\vvv\bigr) ,  \\
 \label{E-Ap}
\bigl|\,\bigl( Q A'\,\vvv,  A'\vvv\bigr)\,\bigr|
&   \eqv   \big| A' \vvv\,|^2
   \le  \Gamma_0 \, \bigl(\Psi\vvv,\vvv\,\bigr) .
\end{align}
Finally, if  Hypothesis~\ref{H-CO} is~satisfied,
the symmetrizer $\, Q(x)$
is a \emph{nearly diagonal matrix},
in the sense that
there exist two positive constants
$\,\Gamma_j \equiv  \Gamma_j(m,M)$, $j=1,2$,
for which
\begin{equation} \label{E-Q sim Psi-x}
\Gamma_1 \, \bigl(\Psi\vvv,\vvv\bigr)
  \mn  \bigl( Q\vvv,\vvv\bigr)
  \mn  \Gamma_2\, \bigl(\Psi\vvv,\vvv\bigr) .
\end{equation}
\end{Proposition}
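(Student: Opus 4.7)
The plan is to exploit systematically the factorization $Q = W^T W$ and the relation $QA = W^T \Lambda W$ (which follows from the eigenvector identity~\eqref{eq:eigenvectors}), thereby converting statements about $Q$ and $QA$ into statements about the $m$ linear forms $\vvv \mapsto (\wwm{k}(x),\vvv)$.

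\emph{Smoothness and determinant formula.} Since $Q_{ij} = \sum_{k=1}^m W_{ki}W_{kj}$ is a symmetric function of the roots $\tau_1(x),\dotsc,\tau_m(x)$ (each $W_{k,\cdot}$ is built from $\PP_{\wh k}$, and summing over $k$ removes the asymmetry), the fundamental theorem on symmetric polynomials presents $Q_{ij}$ as a polynomial in the elementary symmetric polynomials of the roots, i.e.\ in $a_1(x),\dotsc,a_m(x)$; consequently $Q\in \mathscr{C}^\infty$. For~\eqref{E-detQ}, I would use the Lagrange-type identity $\PP_{\wh k}(x\pv\tau_l)=\delta_{kl}\prod_{j\ne k}(\tau_k-\tau_j)$, which amounts to the matrix equation $W V^T = D$ with $V$ the classical Vandermonde $V_{lj}=\tau_l^{m-j}$ and $D$ diagonal with entries $\PP_{\wh k}(x\pv\tau_k)$. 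Taking determinants and squaring yields~\eqref{E-detQ}.

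\emph{The three bounds \eqref{E-QA}, \eqref{E-QAp}, \eqref{E-Ap}.} The first is immediate from $(QA\vvv,\vvv)=\sum_k \tau_k(\wwm{k},\vvv)^2$, which is bounded in absolute value by $\taumax\,(Q\vvv,\vvv)$; combined with the crude bound $(Q\vvv,\vvv)\le C|\vvv|^2$ furnished by~\eqref{E-adk} (via the smoothness part above), this gives~\eqref{E-QA}. For~\eqref{E-Ap}, the Sylvester structure of $A$ makes $A'$ a rank-one matrix whose only nonzero row is $(-a_m',\dotsc,-a_1')$, so $|A'\vvv|^2=\bigl(\sum_k a_k'(x)\,v_{m-k+1}\bigr)^2$; the key observation is that differentiating $a_k(x)=(-1)^k e_k(\tau_1(x),\dotsc,\tau_m(x))$ and applying Bron\v{s}te\u{\i}n's Lipschitz estimate on the roots yields $|a_k'(x)|^2 \le \Gamma_0\,\psi_{k-1}(x)$, which after the obvious index shift matches $(\Psi\vvv,\vvv)$. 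Bound~\eqref{E-QAp} is proved analogously by differentiating $QA = W^T\Lambda W$ entrywise, using that each entry of $QA$ is polynomial in the $a_j(x)$ and controlling the resulting $a_j'(x)$-factors by $\psi_{j-1}(x)^{1/2}$ through the same Bron\v{s}te\u{\i}n estimate.

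\emph{Near-diagonality~\eqref{E-Q sim Psi-x}.} This is the only step that genuinely exploits Hypothesis~\ref{H-CO} and is the main obstacle. The upper inequality $(Q\vvv,\vvv)\le \Gamma_2(\Psi\vvv,\vvv)$ is comparatively easy: each entry $w_{k,j}(x)$ is (up to sign) an elementary symmetric polynomial of degree $m-j$ in $\{\tau_i(x):i\ne k\}$, so Cauchy-Schwarz gives $w_{k,j}(x)^2 \le C\,\psi_{m-j}(x)$ uniformly in $k$, and expanding $(\wwm{k},\vvv)^2$ and summing over $k$ yields the claim. The lower inequality $\Gamma_1(\Psi\vvv,\vvv)\le(Q\vvv,\vvv)$ is the delicate part: one must prevent degeneracies in which $\vvv$ is nearly orthogonal to every $\wwm{k}(x)$ while $\Psi\vvv$ remains large. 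Following Jannelli~\cite{Jannelli1989,Jannelli2009}, I would argue by induction on $m$, splitting the analysis between the region where at least one root is small (so that the induction hypothesis applies to the reduced polynomials of $\PP$) and the region where every root is bounded away from zero; in the latter regime Hypothesis~\ref{H-CO} forces $|\tau_j(x)-\tau_k(x)|$ to be comparable to $(\tau_j(x)^2+\tau_k(x)^2)^{1/2}$, and this quantitative non-degeneracy between any two rows $\wwm{j},\wwm{k}$ is precisely what is needed to recover the full weight $(\Psi\vvv,\vvv)$ in the quadratic form $(Q\vvv,\vvv)$.
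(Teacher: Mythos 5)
Your overall route (exploit $Q=W^TW$, $QA=W^T\Lambda W$, bound the entries of $W$, $W'$, $A'$ by the weights $\psi_j^{1/2}$ via elementary symmetric functions of the roots and Bron\v{s}te\u{\i}n's Lipschitz estimate) is exactly the standard one: the paper itself gives no argument but refers to \cite[Propositions 1.10 and 1.11]{ST-JHDE}, whose proofs run along these lines. Your treatment of the smoothness of the entries, of \eqref{E-detQ} (the identity $WV^T=D$ plus the Vandermonde determinant, extended by continuity to coalescing roots), of \eqref{E-Ap}, and of the upper bound in \eqref{E-Q sim Psi-x} is correct; for \eqref{E-QA} your first step already gives the inequality actually used later, namely $|(QA\vvv,\vvv)|\mn\taumax(Q\vvv,\vvv)$ (compare \eqref{E-QQA}; the form $\taumax|\vvv|^2$ printed in the statement seems to be a misprint, and your extra step only yields it with an additional constant $C(m,L)$). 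For \eqref{E-QAp} your one-line description is right in spirit, but it does require the entrywise bookkeeping $\bigl|\tau_k(\wwm{k}')_j\bigr|\mn C\,\psi_{m-j}^{1/2}$, i.e.\ the observation that $\tau_k^2$ times a product of $m-j-1$ distinct squared roots (all different from $\tau_k$) is one of the monomials of $\psi_{m-j}$; "controlling the $a_j'$-factors by $\psi_{j-1}^{1/2}$" alone does not deliver the quadratic-form bound.

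The genuine gap is the lower bound $\Gamma_1(\Psi\vvv,\vvv)\mn(Q\vvv,\vvv)$ in \eqref{E-Q sim Psi-x}, which is the only place Hypothesis~\ref{H-CO} enters and is the real content of the proposition. What you offer there is a statement of intent, not a proof: you say you would induct on $m$ and split into a region where some root is "small" and a region where all roots are "bounded away from zero", but no scale is fixed (a homogeneity/normalization step, e.g.\ reducing to $\max_j|\tau_j|=1$, is missing, and without it "small" has no meaning); it is not explained how an inductive hypothesis about the degree-$(m-1)$ reduced polynomials yields the degree-$m$ quadratic-form inequality; and the heuristic that Hypothesis~\ref{H-CO} makes $|\tau_j-\tau_k|$ comparable to $(\tau_j^2+\tau_k^2)^{1/2}$ does not by itself control the degenerate configurations, since under \eqref{E-CO} the roots may still cluster at $0$, where $Q$ genuinely degenerates and the claim is precisely that it degenerates no faster than $\Psi$, uniformly in the admissible root configurations. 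A complete argument must quantify this (e.g.\ by a scaling-plus-compactness or an explicit inductive estimate as in \cite[Proposition 1.11]{ST-JHDE} and Jannelli's work); as written, the key estimate of the proposition remains unestablished.
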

\sm

\begin{proof}
See \cite[Propositions~1.10 and~1.11]{ST-JHDE}.
\end{proof}

Always in  \cite{ST-JHDE} we define the energy
\begin{equation} \label{eq:energy}
E(t,\uuu)
  :\ug  \frac{1}{2}
      \int_{I_t} \bigl( Q(x) \, \uuu(t,x) \,, \,\uuu(t,x)\bigr)\,dx ,
\end{equation}
where
\begin{equation} \label{E-def It}
I_t
  :\ug  [x_0-\rho(t), \, x_0+\rho(t)],
\qquad
\rho(t)
  : \ug \rho_0-\taumax \, t ,
\end{equation}
and, using the properties \eqref{E-QA}--\eqref{E-Ap} of $\, Q(x)$,
we prove the apriori estimate
\begin{equation} \label{E-E}
E(t,\uuu)
  \mn  C(t)\, \Bigl\{
  E (0, \uuu)
       \+  \int_0^t \!\!\!\int_{I_s} \big|f(s,x)\big| \, dx\, ds
     \Bigr\} \, ,
\quad \ 0\mn t\mn  \rho_0/\taumax \,.
\end{equation}
\gb
\noi {}From \eqref{E-E}  we cannot derive an estimate on $\uuu(t)$:
indeed to his end we need an estimate like $\,\|\uuu(t, \cdot)\| \le C\,E(t,\uuu)$,
which holds only when $\, Q(x)$ is coercive (see \eqref{eq:energy}).
Now, by \eqref{E-detQ} it~follows that~$\, Q(x)$ is~coercive
if and only if  $\,\PP(x \pv\tau)$ is~strictly hyperbolic,
whereas it degenerates at~those points~$x$
where the~characteristic roots are~multiple.
\sm
\gb
However,
thanks to~Hypothesis~\ref{H-CO},
a~weak coercivity estimate holds.
Indeed, taking into account that the last element of the matrix~$\Psi(x)$ is $1$, by \eqref{E-Q sim Psi-x} we derive:
\begin{equation} \label{E-weakcoercivity}
\bigl( Q(x) \, \vvv , \vvv\bigr) \mg \Gamma_1\, |v_m|^2  ,
\qquad
\text{for all} \ \vvv=(v_1,\dotsc,v_m)^T\in\R^m. 
\end{equation}
Hence we get:
\[
\bigl\|\partial_t^{m-1} \uuu(t,\cdot)\bigr\|_{L^2(I_t)} \mn \, \frac{2}{\Gamma_1}\ E(t,\uuu)  ,
\]
which yields, by~integration in $\,t$,
an apriori estimate of~$\,\uuu(t,\cdot)$ in $L^2(I_t)$. \sm

Differentiating repeatidely~\eqref{E-system m} in~$x$,
and using a suitable estimate of~the derivatives
$ A^{(h)}(x)$  in terms of the $\,\psi_j(x)$'s,
we get an estimate of~$\,\uuu(t,\cdot)$ in  $\,{H^k(I_t)}$.

\section{Reduction to~a first order system} \label{S:princ-symb}

Let us go back to an equation of the general type:
\begin{equation} \label{eq:scalar-eq}
\LL(x\pv\partial_t,\partial_x)u  \equiv\, \PP(x\pv\partial_t,\partial_x)u
           \,-\,  \sum_{d=0}^{m-1} \RR_d(t,x\pv\partial_t,\partial_x)\,u
           \ug 0,
\end{equation}
with principal symbol
\begin{equation} \label{eq:princ-symb}
\PP(x\pv\tau,\xi)
  \ug \tau^m
        \+ a_1(x) \, \tau^{m-1} \, \xi
        \+  \dotsb
        \+ a_m(x) \, \xi^m .
\end{equation}

Proceeding as in the homogeneous case,  we~transform such an equation
into an equivalent first order system.
Before to~proceed,
we~fix some notation.

\begin{Notations} \hfill
\begin{itemize}
\item  $\PP(x \pv \tau) : \ug \PP(x \pv \tau,1), \qquad \RR_d(x \pv \tau) : \ug \RR_d(x \pv \tau,1)$.\sm

\item  For each $\,d=1,\dotsc,m$,
  the \emph{monic $\tau$-derivative} of~order
       $\,m-d$ of~$\,\PP(x\pv\tau)$ is:
              \begin{equation}  \label{eq:monic}
       \PP^{(d)}(x \pv \tau)
         :\ug  \frac{d!}{m!} \ \de_{\tau}^{m-d}
              \, \PP(x \pv \tau)
         \ug  \sum_{k=0}^d \,a_{d,k}(x) \, \tau^{d-k}
         \ug  \prod_{\ell=1}^{d}\, \bigl(\tau-\taud{\ell}(x)\bigr).
         \end{equation}

Note that the functions  $\,a_{d,k}(x)$ are, up to constants,
the coefficients $\,a_j(x)$ of the principal symbol \eqref{eq:princ-symb}.
       \sm

\item  For each $\,d=1,\dotsc,m$,
       $\,A_d(x)$ denotes the~Sylvester matrix associated to~$\,\PP^{(d)}(x\pv\tau)$\,:
              \begin{equation}   \label{eq:Ad}
       A_d(x)
         : \ug  \begin{pmatrix}
              0 & 1 & \cdots & 0  \\
              \vdots &  & \ddots & \vdots  \\
                & &  & 0  \\
              0 & \cdots & 0 & 1  \\
              -a_{d,d}(x) &  -a_{d,d-1}(x) & \cdots & -a_{d,1}(x)
             \end{pmatrix} \, .
       \end{equation}
              If $\,d=m$,  $\,A_{m}(x)$ coincides with the matrix $\, A(x)$ considered in \S 2 (see \eqref{E-Sylvester}).\end{itemize}

\noi Let
\[
p(\tau) \eqv c_{0}\tau^{d}+ \dotsc +c_j \, \tau^{d-j}+ \dotsc + c_{d}
\]
be a generic polynomial of~degree~$\,d$.
Therefore:\sm
\begin{itemize}
\item  $\,\mathsf{vec}(p)\,$ is the~row vector in~$\,\R^{d+1}$
       formed by the~coefficients of~$\,p$\,:
              \[
       \mathsf{vec}(p)
         : \ug  (c_d,c_{d-1},\dotsc,c_1,c_0) \in \R^{d+1}.
       \]

\item
     $\,\Mat_j(p)\,$ is the~$j\times(d+1)$ matrix
       obtained from the~null matrix by
       replacing the~entries of~the last line
       with the~coefficients of~$\,p(\tau)$:
              \[
       \Mat_j(p)
         : \ug   \begin{pmatrix}
             0 & \cdots & & & 0  \\
             \vdots & & & & \vdots  \\
               \\
             0 & \cdots & & & 0  \\
             c_d & c_{d-1} & \cdots & c_1 & c_0
             \end{pmatrix} \, .
       \]

\item  Finally, we define the $\,(d+1)\times (d+1)$-matrices
              \begin{gather*}
       \alpha_d(x)
         \eqv\, \Mat_d\bigl(\PP^{(d)}(x \pv\cdot)\bigr)
          :\ug  \begin{pmatrix}
             0 & \cdots & & & 0  \\
             \vdots & & & & \vdots  \\
               \\
             0 & \cdots & & & 0  \\
             a_{d,d}(x) & a_{d,d-1}(x) & \cdots & a_{d,1}(x) & 1
             \end{pmatrix} \,, \\
              \rho_d(t,x)
         \eqv\,  \Mat_m\bigl(\RR_d(t,x \pv\cdot)\bigr)
          : \ug  \begin{pmatrix}
              0 & & \cdots & 0  \\
             \vdots &  &  & \vdots  \\
                 \\
              0 & & \cdots & 0  \\
             r_{d,d}(t,x) & r_{d,d-1}(t.x) & \cdots & r_{d,0}(t,x)
             \end{pmatrix} \, .
       \end{gather*}
\end{itemize}
\end{Notations}
\md

Now, for  $\,u =u(t,x)$ and $\,0\le d\le m-1$,
we consider the vector formed by the derivatives of order $\,d$ of $\,u$\,:
\begin{equation} \label{E-uji}
\uu{d}
  :\ug  \transpose{(u_{d,0},\dotsc,u_{d,d})} \in\, \R^{d+1},    \qquad  u_{d,j}:\ug  \partial_t^j \partial_x^{d-j} u \, ,
\qquad 0 \le j \le d.
\end{equation}
i.e.,
\begin{equation} \label{E-uj}
\uu{0}
 : \ug  u \, ,
\quad
\uu{1}
  :\ug  \begin{pmatrix}
      \partial_x u \\ \partial_t u
      \end{pmatrix} ,
\  \
\cdots \  \ , \  \
\uu{m-1}
  : \ug  \begin{pmatrix}
      \partial_x^{m-1} u \\
      \partial_t \partial_x^{m-2} u \\
      \vdots  \\
      \partial_t^{m-2} \partial_x u \\
      \partial_t^{m-1} u
      \end{pmatrix} \, .
\end{equation}
\sm
By a simple calculation
we~see that, if $\,u(t,x)$ is a solution  of \eqref{eq:scalar-eq},
then $\,\uu{d}$ satisfies the~system
\begin{align}
\partial_t \uu{d}
&  \ug  A_{d+1}(x) \, \partial_x \uu{d} \+ \alpha_{d+1}(x) \, \uu{d+1}\,,
  \qquad    \text{if} \quad d \ug 0,\dotsc, m-2, \label{E-S1} \\
\partial_t \uu{m-1}
&  \ug  A_m(x) \, \partial_x \uu{m-1}
     \+  \sum_{j=0}^{m-1} \rho_j(t,x) \, \uu{j}
     \+  \fff(t,x) \, , \label{E-S2}
\end{align}
where $\,\fff = \transpose{(0,\dotsc, 0, f)} \in\, \R^{d+1}$.
Next, setting
\begin{equation}
 \mu :\ug \frac{m(m-1)}2 \,,
\end{equation}
we~define the $\,\mu$-vectors
\begin{equation} \label{E-sU}
U   : \ug  \begin{pmatrix}
      \uu{0} \\
      \uu{1} \\
      \vdots  \\
      \uu{m-1}
      \end{pmatrix}  ,
\qquad
F
  :\ug  \begin{pmatrix}
      \,0\, \\ \,0\, \\
      \vdots  \\
      \,\fff \,
      \end{pmatrix}  .
\end{equation}
\gb
\noi Combining~\eqref{E-S1} and~\eqref{E-S2},
we conclude that the equation~\eqref{eq:scalar-eq} is~equivalent to~the~system
\begin{equation}
 \label{eq:sistema-storto}
U_t
  \ug  \sA(x) \, U_x
      \+ \sB(t,x) \, U
      \+ F(t,x) \, ,
\end{equation}
where $\,\sA$ and $\, \sB$
are $\mu \times \mu$ matrix
having a block structure
with blocks of~different size:
\begin{align}
   \label{eq:A-storto}  \sA(x)
&  :\ug  \begin{pmatrix}
       A_1(x) & 0 & \cdots & 0  \\
       \vdots & A_2(x) & & \vdots  \\
        & & \ddots &   \\
       0 & \cdots & 0 & A_m(x)
       \end{pmatrix} \, ,  \\
\sB(t,x)
&  : \ug \begin{pmatrix}
       0 & \alpha_1(x) & 0 & \cdots & 0  \\
       \vdots & & \alpha_2(x) & & \vdots  \\
       & & & \ddots & 0  \\
       0 & & \cdots & 0 & \alpha_{m-1}(x)  \\
       \rho_0(t,x) & \rho_1(t,x) & \cdots & & \rho_{m-1}(t,x)
       \end{pmatrix} \, .  \label{eq:B-storto}
\end{align}
\sm

\begin{Example} \
If $\,m=3$, we~have:
\begin{multline*}
\def\zero#1{\multicolumn{1}{c}{#1}}
\partial_t
\begin{pmatrix}
u  \\  u_x  \\  u_t  \\  u_{xx}  \\  u_{tx}  \\  u_{tt}
\end{pmatrix}
  \ug
\left(\begin{tabular}{c|cc|ccc}
$-a_{2,1}$  &  0  &  \zero{0}  &  0  &  0  &  0  \\\cline{1-3}
0  &  0  &  1  &  0  &  0  &  0  \\
0  &  $-a_{1,2}$  &  $-a_{1,1}$  &  0  &  0  &  0  \\\cline{2-6}
\zero{0}  &  0  &  0  &  0  &  1  &  0  \\
\zero{0}  &  0  &  0  &  0  &  0  &  1  \\
\zero{0}  &  0  &  0  &  $-a_{0,3}$  &  $-a_{0,2}$  &  $-a_{0,1}$
\end{tabular}\right)
\partial_x
\begin{pmatrix}
u  \\  u_x  \\  u_t  \\  u_{xx}  \\  u_{tx}  \\  u_{tt}
\end{pmatrix}  \\
  \+
\left(\begin{tabular}{c|cc|ccc}
0  &  $a_{2,1}$  &  1  &  0  &  0  &  0  \\\hline
0  &  0  &  0  &  0  &  0  &  0  \\
0  &  0  &  0  &  $a_{1,2}$  &  $a_{1,1}$  &  1  \\\hline
0  &  0  &  0  &  0  &  0  &  0  \\
0  &  0  &  0  &  0  &  0  &  0  \\
$r_{0,0}$  &  $r_{0,1}$  &  $r_{1,0}$
  &  $r_{0,2}$  &  $r_{1,1}$  &  $r_{2,0}$
\end{tabular}\right)
\begin{pmatrix}
u  \\  u_x  \\  u_t  \\  u_{xx}  \\  u_{tx}  \\  u_{tt}
\end{pmatrix} \, .
\end{multline*}
\end{Example}
\md

In order to construct a good symmetrizer for the matrix $\,\sA(x)$, we prove that each block $\,A_d(x)$  is the Sylvester matrix coming from a hyperbolic polynomial
which satisfies~\eqref{E-CO}, and we take the corresponding Jannelli symmetrizer.
Now, $\,A_d(x)$  is the Sylvester matrix of the  polynomial $\,\PP^{(d)}(x\pv\tau)$  (cf. \eqref{eq:monic}), which is proportional to the derivative $\,\de_\tau^{m-d}\PP(x\pv\tau)$; thus, we have  only to prove the following result:

\begin{Proposition} \label{P-der P}
Let~$\,\PP(x \pv \tau)$ be~a hyperbolic polynomial
with roots satisfying~\eqref{E-CO}
for some constant $\,M$.
Then  $\,\partial_\tau \PP(x \pv \tau)$ is~a hyperbolic polynomial
and its roots satisfy~\eqref{E-CO} with a constant~$\,\wt M\eqv \wt{M}(m, M)$  independent of~$\,x$.
Consequently, the polynomials $\,\PP^{(d)}(x\pv\tau)$  enjoy the same property.
\end{Proposition}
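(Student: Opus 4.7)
The plan is to obtain the statement in three main steps, then iterate.

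First, I would note that $\partial_\tau\PP(x;\tau)$ is hyperbolic by the classical Rolle argument: any root of $\PP(x;\cdot)$ of multiplicity $k\ge 1$ is a root of $\partial_\tau\PP$ of multiplicity exactly $k-1$ (direct differentiation), and Rolle's theorem supplies one simple root of $\partial_\tau\PP$ strictly between each pair of distinct consecutive roots of $\PP$. This produces the interlacing $\tau_j(x)\le\sigma_j(x)\le\tau_{j+1}(x)$ for the sorted roots $\sigma_1(x)\le\dots\le\sigma_{m-1}(x)$ and shows, as a byproduct, that any multiple root of $\partial_\tau\PP$ must itself be a multiple root of $\PP$ (the ``gap'' roots furnished by Rolle being automatically simple).

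Second, I would use \eqref{E-CO} exactly once in a qualitative way: setting $\tau_j(x)=\tau_k(x)$ in \eqref{E-CO} forces $2\tau_j(x)^2\le 0$, hence $\tau_j(x)=0$. Thus all multiple roots of $\PP(x;\cdot)$---and therefore, by the previous step, of $\partial_\tau\PP(x;\cdot)$---are located at $\tau=0$.

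Third, the existence of a uniform $\tilde M=\tilde M(m,M)$ I would extract by compactness and homogeneity. The condition \eqref{E-CO} and the ratio $(\sigma_j^2+\sigma_k^2)/(\sigma_j-\sigma_k)^2$ are both scale-invariant under the common rescaling $\tau_i\mapsto\lambda\tau_i$, so after normalising $\max_i|\tau_i(x)|=1$ the admissible sorted tuples form a compact subset $\Sigma_M\subset[-1,1]^m$ on which the map to the sorted roots of the derivative is continuous. If the desired bound failed, one could extract a subsequence $\tau^{(n)}\to\tau^\infty\in\Sigma_M$ and a fixed pair $(j,k)$ for which $\bigl((\sigma_j^{(n)})^2+(\sigma_k^{(n)})^2\bigr)/\bigl(\sigma_j^{(n)}-\sigma_k^{(n)}\bigr)^2\to+\infty$ while $\sigma_j^{(n)},\sigma_k^{(n)}$ share a common limit $\sigma^\infty$; by Step 2 applied at $\tau^\infty$ one must have $\sigma^\infty=0$. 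The only remaining delicate case---both roots going to $0$ but with ratio blowing up---I would handle by a blow-up at the origin: setting $\lambda_n:=\max(|\sigma_j^{(n)}|,|\sigma_k^{(n)}|)\to 0$, the rescaled polynomial $\lambda_n^{-m}\PP^{(n)}(\lambda_n\tau)$ converges, after dividing out the roots that diverge, to a polynomial of strictly lower degree whose roots still satisfy \eqref{E-CO} by scale invariance, and an induction on $m$---with base case $m=3$ handled directly by the quadratic formula applied to $\partial_\tau\PP$---yields the contradiction.

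The statement for $\PP^{(d)}(x;\tau)$ then follows by applying the above $m-d$ times, since up to the scalar in \eqref{eq:monic} each $\PP^{(d)}$ is obtained by one further $\tau$-differentiation from $\PP^{(d+1)}$; the resulting $\tilde M$ depends only on $m$ and $M$. The only non-routine point in the whole argument is the blow-up step ruling out the degenerate coalescence at $\tau=0$; everything else is either classical or a direct application of \eqref{E-CO}.
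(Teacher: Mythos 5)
Your route is genuinely different from the paper's: the paper quotes Peyser's refinement \eqref{E-Peyser} of the interlacing property and then, in Lemma~\ref{L-pq}, runs a three-line explicit computation ($\tau_{j+1}-\tau_j\le\eta^{-1}(\lambda_{j+1}-\lambda_j)$, then bound $\lambda_j^2$ and $\lambda_{j+1}^2$ separately) to get the explicit constant $\wt M=4M/\eta^2+2$ with $\eta\ge 1/m$. You instead use \eqref{E-CO} only qualitatively (multiple roots of $\PP$, hence of $\partial_\tau\PP$ by your Rolle count, sit at $\tau=0$) and try to produce a uniform, non-explicit $\wt M(m,M)$ by normalization, compactness and a blow-up. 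Your Steps~1 and~2 are correct (and Step~1 reproduces Remark~\ref{R-der P}), and the compactness argument does dispose of the case where the two critical points coalesce at a nonzero limit.

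The gap is exactly at the point you flag as non-routine, and as written it is not closed. In the degenerate case you rescale by $\lambda_n=\max(|\sigma_j^{(n)}|,|\sigma_k^{(n)}|)\to0$; since $\max_i|\tau_i^{(n)}|=1$, at least one rescaled root $\tau_i^{(n)}/\lambda_n$ escapes to infinity, so the rescaled polynomial itself does not converge, and the whole argument hinges on the claim that the two coalescing rescaled critical points (which satisfy $\max(|\hat\sigma_j^{(n)}|,|\hat\sigma_k^{(n)}|)=1$ and $\hat\sigma_j^{(n)}-\hat\sigma_k^{(n)}\to0$) converge to a \emph{multiple} critical point, of modulus $1$, of the limit of the factor built from the non-escaping roots. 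That is a genuine statement needing proof --- e.g.\ writing $\PP_n'/\PP_n=\sum_i(\tau-\tau_i^{(n)}/\lambda_n)^{-1}$, noting the escaping terms vanish uniformly on compacts, and invoking Hurwitz to transfer the two zeros with their multiplicity to the limit factor --- and nothing in your sketch supplies it; "dividing out the roots that diverge" is precisely the step that must be justified. Moreover, once that convergence is available, the contradiction follows directly from your Steps~1--2 applied to the limit polynomial (a hyperbolic polynomial of lower degree whose roots satisfy \eqref{E-CO}, so its derivative cannot have a multiple root of modulus $1$); the "induction on $m$ with base case $m=3$" you invoke neither replaces the missing convergence step nor plays a clear role in producing the contradiction, which suggests the crux was not actually worked out. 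So the approach is completable, but the proposal as it stands has a hole at its decisive step, whereas the paper's Peyser-based argument is shorter and yields an explicit constant.
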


\begin{Remark} \label{R-der P} \
Before proving Proposition~\ref{P-der P}
we note that, if~$p(\tau)$ is a hyperbolic polynomial, then
 $p'(\tau)$ is also hyperbolic,
and the roots of~the two polynomials are~\emph{interlaced},
that is,
denoting by~$\,\tau_1\le \tau_2\le \dotsb\le \tau_m\,$  the roots of~$\,p(\tau)$,
and by $\,\lambda_1\le\lambda_2\le\dotsb\le\lambda_{m-1}$ those of~$\,p'(\tau)$,
one has
\begin{equation} \label{E-interlaced1}
\tau_j
  \mn  \lambda_j
  \mn  \tau_{j+1} \, ,
\qquad
j\ug 1,\dotsc,m-1.
\end{equation}
Indeed,~\eqref{E-interlaced1} follows
 from the classical Rolle's theorem, taking into account  that any root of~$\,p(\tau)$ with multiplicity $\,m>1$
is  a root of~$\,p'(\tau)$ with multiplicity $\,m-1$.
\end{Remark}
\smallskip

Let us now  recall that, in~\cite{Peyser-1967}, Peyser refined the estimate~\eqref{E-interlaced1} by
proving that
\begin{equation} \label{E-Peyser}
\tau_j \+ \frac{\tau_{j+1}-\tau_j}{m-j+1}
  \mn  \lambda_j
  \mn  \tau_{j+1}\, -\, \frac{\tau_{j+1}-\tau_j}{j+1} \, ,
\qquad
j=1,\dotsc,m-1 .
\end{equation}
Thus,~Proposition~\ref{P-der P} is  a direct consequence of the following Lemma.

\begin{Lemma} \label{L-pq}
Let $\,p(\tau)$ and~$\,r(\tau)$
be hyperbolic polynomials,
of degree~$\,m$ and~$\,m-1$ respectively,
whose roots,
$\,\tau_1\le\tau_2\le \dotsb\le \tau_m$
and
$\,\lambda_{1}\le\lambda_2\le \dotsb\le \lambda_{m-1}$,
satisfy
\begin{equation} \label{E-Nuij-ext}
\tau_j
  \mn  \lambda_j
  \mn  \tau_{j+1} \,-\, \eta \, (\tau_{j+1}-\tau_j) \, ,
\qquad
1\mn j\le m-1 \, ,
\end{equation}
for some constant~$\,\eta\in\left]0,1\right]$. Therefore, if
\begin{equation}
 \label{E-COs}
\tau_j^2 \+\tau_k^2
  \mn  M\,(\tau_j-\tau_k)^2,
\qquad
1\mn j<k \mn  m,
\end{equation}
for some constant $\,M$, then
\begin{equation} \label{E-COtilde}
\lambda_j^2 \+\lambda_k^2
  \mn  \wt{M}\,(\lambda_j-\lambda_k)^2,
\qquad
1\mn   j<k\le m \, ,
\end{equation}
with
\begin{equation} \label{c-tilde}
\wt{M} : \ug \frac{4M}{\eta^2}\+2 \, .
\end{equation}
\end{Lemma}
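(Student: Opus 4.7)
\textbf{Plan for the proof of Lemma~\ref{L-pq}.} The proof will be a short direct computation in three steps, exploiting only the one-sided interlacing \eqref{E-Nuij-ext} and the separation condition \eqref{E-COs} on the $\tau_j$'s. The cases $k=j+1$ and $k>j+1$ will be handled uniformly.

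\emph{Step 1 (separation of the $\lambda$'s).} First I would observe that, since $k\ge j+1$ implies $\tau_k\ge\tau_{j+1}$, combining $\lambda_k\ge\tau_k$ with the upper bound $\lambda_j\le\tau_{j+1}-\eta(\tau_{j+1}-\tau_j)$ yields
\[
\lambda_k-\lambda_j \ \ge\ (\tau_k-\tau_{j+1})+\eta(\tau_{j+1}-\tau_j)\ \ge\ \eta(\tau_k-\tau_j),
\]
using $\eta\le1$ and $\tau_k-\tau_{j+1}\ge0$. Squaring and invoking \eqref{E-COs}, this gives $\tau_j^2+\tau_k^2\le M(\tau_k-\tau_j)^2\le (M/\eta^2)(\lambda_k-\lambda_j)^2$; this is the only place $\eta$ intervenes.

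\emph{Step 2 (estimating $\lambda_j^2$).} Because $\tau_j\le\lambda_j\le\tau_{j+1}$, the convexity of $t\mapsto t^2$ gives $\lambda_j^2\le\max(\tau_j^2,\tau_{j+1}^2)\le\tau_j^2+\tau_{j+1}^2$. Since $\tau_{j+1}$ lies between $\tau_j$ and $\tau_k$, the same convexity argument shows $\tau_{j+1}^2\le\max(\tau_j^2,\tau_k^2)\le\tau_j^2+\tau_k^2\le M(\tau_k-\tau_j)^2$, and of course $\tau_j^2\le M(\tau_k-\tau_j)^2$ too. Hence
\[
\lambda_j^2\ \le\ 2M(\tau_k-\tau_j)^2.
\]

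\emph{Step 3 (estimating $\lambda_k^2$, the main obstacle).} The difficulty is that $\lambda_k$ can sit near $\tau_{k+1}$, which is \emph{not} in general controlled by $(\lambda_k-\lambda_j)^2$: think of $k=m-1$ with $\tau_m$ a large outlier, in which case the crude bound $\lambda_k^2\le\max(\tau_k^2,\tau_{k+1}^2)$ is useless. My trick is to bypass $\tau_{k+1}$ altogether: write $\lambda_k=\tau_k+(\lambda_k-\tau_k)$ and use $(a+b)^2\le 2a^2+2b^2$ to get
\[
\lambda_k^2\ \le\ 2\tau_k^2+2(\lambda_k-\tau_k)^2\ \le\ 2M(\tau_k-\tau_j)^2+2(\lambda_k-\tau_k)^2.
\]
Since $\tau_k\ge\tau_{j+1}\ge\lambda_j$, one has $0\le\lambda_k-\tau_k\le\lambda_k-\lambda_j$, hence $(\lambda_k-\tau_k)^2\le(\lambda_k-\lambda_j)^2$. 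Adding the estimates of Steps~2 and~3 and replacing $(\tau_k-\tau_j)^2$ by $\eta^{-2}(\lambda_k-\lambda_j)^2$ via Step~1 gives
\[
\lambda_j^2+\lambda_k^2\ \le\ 4M(\tau_k-\tau_j)^2+2(\lambda_k-\lambda_j)^2\ \le\ \Bigl(\frac{4M}{\eta^2}+2\Bigr)(\lambda_k-\lambda_j)^2,
\]
which is \eqref{E-COtilde} with $\widetilde M=\frac{4M}{\eta^2}+2$, as required.
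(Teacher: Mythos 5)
Your argument is correct, and it reaches the paper's constant $\wt M=4M/\eta^2+2$ by a route that is organized differently. The paper first reduces to consecutive indices $k=j+1$ (via the telescoping estimate $\sum_h(\lambda_h-\lambda_{h+1})^2\le\bigl(\sum_h(\lambda_h-\lambda_{h+1})\bigr)^2$, valid because the increments have one sign), and only then uses the gap condition in the form $\tau_{j+1}-\tau_j\le\eta^{-1}(\lambda_{j+1}-\lambda_j)$ together with the enclosures $\tau_j\le\lambda_j\le\tau_{j+1}$ and $\tau_{j+1}\le\lambda_{j+1}\le\tau_{j+1}+(\lambda_{j+1}-\lambda_j)$, splitting $\lambda_{j+1}^2\le3\tau_{j+1}^2+2(\lambda_{j+1}-\lambda_j)^2$. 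You instead treat an arbitrary pair $j<k$ at once, the key point being the uniform lower bound $\lambda_k-\lambda_j\ge(\tau_k-\tau_{j+1})+\eta(\tau_{j+1}-\tau_j)\ge\eta(\tau_k-\tau_j)$ (using $\lambda_k\ge\tau_k$, $\eta\le1$), and then the analogous splitting $\lambda_j^2\le2M(\tau_k-\tau_j)^2$, $\lambda_k^2\le2\tau_k^2+2(\lambda_k-\tau_k)^2$ with $0\le\lambda_k-\tau_k\le\lambda_k-\lambda_j$. Each step checks out (the enclosure $\tau_j\le\lambda_j\le\tau_{j+1}\le\tau_k$ and Hypothesis \eqref{E-COs} for the pair $(j,k)$ are all that is used), and the degenerate case $\lambda_j=\lambda_k$ causes no harm since then \eqref{E-COs} forces $\tau_j=\tau_k=0$ and hence $\lambda_j=\lambda_k=0$. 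What your variant buys is the elimination of the chaining step, since the $\eta$-gap is exploited directly for non-consecutive roots; what the paper's reduction buys is that the pairwise analysis only ever involves the two adjacent intervals, which keeps the bookkeeping with $\tau_{j+1}$ minimal. Both yield the same constant \eqref{c-tilde}, so either proof serves the application to Proposition~\ref{P-der P} equally well.
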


\begin{proof}
It is~sufficient to~prove~\eqref{E-COtilde} for $\,k=j+1$;
indeed, once proved~\eqref{E-COtilde} for $k=j+1$,
the~general case can be reached as follows:
\begin{align*}
\lambda_j^2+\lambda_{k}^2
&  \mn  \sum_{h=j}^{k-1}\, (\lambda_{h}^2+\lambda_{h+1}^2)
   \mn  \sum_{h=j}^{k-1}\, \wt{M}\,
          (\lambda_{h}-\lambda_{h+1})^2  \\
&  \mn  \wt{M}\,
        \bigg\{\sum_{h=j}^{k-1}
              \,(\lambda_{h}-\lambda_{h+1})
        \bigg\}^2
    \ug   \wt{M}\, (\lambda_j-\lambda_k)^2 \, .
\end{align*}

{}From the second inequality in~\eqref{E-Nuij-ext} it follows that
\begin{align*}
\tau_{j+1}\,-\,\tau_j
&  \mn  \frac{1}{\eta} \, (\tau_{j+1}\,-\,\lambda_j) \, ,
\intertext{hence}
\tau_{j+1}\,-\,\tau_j
&  \mn  \frac{1}{\eta} \, (\lambda_{j+1}\,-\,\lambda_j) \, .
\end{align*}
Then, noting that $\,\tau_j \le \lambda_j \le \tau_{j+1}$,
 and using \eqref{E-COs}, we get:
\begin{align*}
\lambda_j^2
&  \mn  \tau_j^2 \+ \tau_{j+1}^2
   \mn  	M\, (\tau_{j+1}-\tau_j)^2
   \mn  \frac{M}{\eta^2} \, (\lambda_{j+1}-\lambda_j)^2 \, .
\intertext{Analogously,
noting that $\,\tau_{j+1} \le \lambda_{j+1} \le \tau_{j+1} + (\lambda_{j+1}-\lambda_j)$,
we~get}
\lambda_{j+1}^2
&  \mn  \tau_{j+1}^2\+\bigl\{\tau_{j+1} \+ (\lambda_{j+1}\,-\,\lambda_j)\bigr\}^2
   \mn  3\, \tau_{j+1}^2 \+2\,(\lambda_{j+1}\,-\,\lambda_j)^2  \\
&  \mn  3\,M\,(\tau_{j+1}-\tau_j)^2 +2\,(\lambda_{j+1}\,-\,\lambda_j)^2
   \mn  \bigg\{\frac{3M}{\eta^2} \+ 2\bigg\} \, (\lambda_{j+1}\,-\,\lambda_j)^2 .
\end{align*}
Summing up, we~obtain~\eqref{E-COtilde}
with~$\wt{M}$ given by~\eqref{c-tilde}. This concludes the proof of Lemma \ref{L-pq}, and hence of Proposition \ref{P-der P}.
\end{proof}
\sm

\begin{Remark}
If, for some~$\,\eta\in\left]0,1\right]$, we replace~\eqref{E-Nuij-ext} by
\[
\tau_j \+ \eta \, (\tau_{j+1}-\tau_j)
  \mn  \lambda_j
  \mn  \tau_{j+1},
\qquad
1\mn j\mn m-1,
\]
 we~get the~same conclusion.
\end{Remark}
\sm

\begin{Remark}
{}From \eqref{E-interlaced1} it follows that
\[
\bigl|\taud{\ell}(x)\bigr| \mn \taumax \,,  \qquad \text{for} \ 1\le \ell\le d\le m.
\]
\end{Remark}
\gb

We are now in the position to construct the wished symmetrizer for~$\sA(x)$.
Indeed, the matrix $\,A_d(x)$ has the same properties of the matrix $\,A(x)$ in \S2,
thus it admits a good symmetrizer $\,Q_d(x)$
with the same properties of $\, Q(x)$ listed in Proposition~\ref{P-Q}.
\sm

Consequently, the $\,\mu\times \mu$ matrix
\begin{equation} \label{eq:Q-storto}
\sQ(x)
  : \ug  \begin{pmatrix}
        Q_1(x) & 0 & \cdots & & 0  \\
        \vdots & Q_2(x) & & & \vdots  \\
          & & \ddots &   \\
        \vdots & & & Q_{m-1}(x) & 0  \\
        0 & 0 & \cdots & 0 & Q_m(x)
        \end{pmatrix}
\end{equation}
is~a symmetrizer of~$\sA(x)$.
Note that, if $\,U$ is given by~\eqref{E-sU}, then:
\begin{equation} \label{E-sQuu}
\bigl(\sQ(x)\,U,U\bigr)
  \ug  \sum_{d=0}^{m-1} \,\bigl(Q_{d+1}(x)\, \uu{d},\uu{d}\bigr) \, .
\end{equation}

\noi Next we define the block diagonal matrix
\textup(with blocks of~different size\textup)
\begin{equation} \label{E-Xi}
\Xi(x)      
  : \ug  \begin{pmatrix}
      \Psi_1(x) & 0 & \cdots & 0  \\
       \vdots & \Psi_2(x) & & \vdots  \\
        & & \ddots &   \\
       0 & \cdots & 0 & \Psi_m(x)
       \end{pmatrix} \,,
    \end{equation}
where $\,\Psi_d(x)$ is~the $\,d\times d$ matrix
analogous to~$\,\Psi(x)$ in~\eqref{eq:Lambda-Psi}
related to~the~polynomial $\,\PP^{(d)}(x\pv\tau)$  (see~\eqref{eq:monic}).
 Then,  writing $\,\sA\equiv\sA(x), \,\sQ\equiv \sQ(x), \ldots$,
we  prove:
\sm

\begin{Proposition} \label{P-QQ}
The matrix $\,\sQ(x)$ in \eqref{eq:Q-storto} is a symmetrizer of~$\,\sA(x)$.
 If the~coefficients $\,a_j(x)$ of  the principal symbol \eqref{eq:princ-symb} verifiy \eqref{E-adk},
 there exists a constant
$\Gamma_0 \eqv \Gamma_0(m,L)$ such that,
for all $\,V \in \R^\mu$, it holds
\begin{align}
 \label{E-QQbdd}
\bigl|\bigl(\sQ \,V , V\bigr)\bigr|
&  \mn  \Gamma_0 \, |V|^2 ,  \\
\bigl|\bigl(\sQ\sA V , V\bigr)\bigr|
&  \mn  \taumax \,
       \bigl(\sQ \,V,V\bigr) \, , \label{E-QQA}  \\
\bigl|\bigl((\sQ\sA)_x V , V\bigr)\bigr|
&  \mn  \Gamma_0 \, \bigl(\Xi \,V , V\bigr) \, , \label{E-QQAp}  \\
 \label{E-AAp}
\bigl|\bigl(\sQ\sA_x\,V , \sA_xV\bigr)\bigr|
&   \eqv  \bigl|\sA_x V\bigr|^2
   \mn  \Gamma_0 \, \bigl(\Xi\, V , V\bigr) .
\end{align}
Moreover, if the operator $\,L$ in \eqref{eq:scalar-eq}
verifies Hypothesis~\ref{H-CO},
 there are two positive constants
$\,\Gamma_j$, $j=1,2$, such that:
\begin{equation} \label{E-cQ sim Xi-x}
\Gamma_1 \, \bigl(\Xi V,V\bigr)
   \mn  \bigl(\sQ V,V\bigr)
   \mn  \Gamma_2 \, \bigl(\Xi V,V\bigr) \, .
\end{equation}

\noi Finally, if
$\, \LL$ verifies also Hypothesis~\ref{H-ph},
there exists a~constant~$\,\Gamma_3$ such that:
\begin{equation} \label{E-QQB}
\Bigl|\bigl(\sQ(x)\sB(t,x)\,V,V\bigr)\Bigr|
  \mn  \Gamma_3 \, \bigl(\sQ(x) V,V\bigr) \, ,
\qquad
\text{for all} \ \,V\in\R^\mu.
\end{equation}
\end{Proposition}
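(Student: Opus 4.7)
The strategy is to reduce all the blockwise assertions to the single-block results of Proposition~\ref{P-Q}, and then to use Lemma~\ref{E-equi} together with Hypothesis~\ref{H-ph} to absorb the off-diagonal coupling in $\sB$. Since $\sA$ and $\sQ$ are both block-diagonal, $\sQ\sA$ is block-diagonal with symmetric blocks $Q_dA_d$ (hence itself symmetric); moreover, by~\eqref{E-sQuu},
\[
(\sQ\sA V,V)\ug\sum_{d=0}^{m-1}(Q_{d+1}A_{d+1}\uu{d},\uu{d}),
\]
and analogous identities hold for $(\sQ V,V)$, $((\sQ\sA)_x V,V)$, $|\sA_x V|^2$, and $(\Xi V,V)$. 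By Proposition~\ref{P-der P} each $\PP^{(d)}$ satisfies~\eqref{E-CO} with a constant depending only on $m$ and $M$, and by Remark~\ref{R-der P} the roots of $\PP^{(d)}$ are bounded by $\taumax$. Consequently the bounds \eqref{E-QQbdd}--\eqref{E-cQ sim Xi-x} follow at once from \eqref{E-QA}--\eqref{E-Q sim Psi-x} applied to each pair $(Q_d,A_d)$ and summed over $d$.

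\textbf{The key estimate \eqref{E-QQB}.} This is the only place where Hypothesis~\ref{H-ph} enters. Expanding via~\eqref{eq:B-storto} yields
\[
(\sQ\sB V,V)\ug\sum_{d=1}^{m-1}(Q_d\alpha_d\uu{d},\uu{d-1})
\+\sum_{j=0}^{m-1}(Q_m\rho_j\uu{j},\uu{m-1}).
\]
Both $\alpha_d=\Mat_d(\PP^{(d)})$ and $\rho_j=\Mat_m(\RR_j)$ have only their last row nonzero, so $\alpha_d\uu{d}=(\mathsf{vec}(\PP^{(d)}),\uu{d})\,e_d$ in $\R^d$ and $\rho_j\uu{j}=(\mathsf{vec}(\RR_j),\uu{j})\,e_m$ in $\R^m$. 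The Cauchy--Schwarz inequality for the positive semi-definite forms $Q_d,Q_m$ gives, for instance,
\[
|(Q_m\rho_j\uu{j},\uu{m-1})|
\mn |(\mathsf{vec}(\RR_j),\uu{j})|\,\sqrt{(Q_m e_m,e_m)\,(Q_m\uu{m-1},\uu{m-1})},
\]
and the corner entries $(Q_d e_d,e_d)$ and $(Q_m e_m,e_m)$ are bounded by a constant depending only on $m$ and $L$, since all entries of $Q_d,Q_m$ are polynomials in the coefficients (Proposition~\ref{P-Q}).

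\textbf{Finishing via Hypothesis~\ref{H-ph}.} By Hypothesis~\ref{H-ph} each $\RR_j$ has a proper decomposition w.r.t.\ $\partial_\tau^{m-1-j}\PP$, which is proportional to $\PP^{(j+1)}$; Lemma~\ref{E-equi}, applied in dimension $j+1$, yields $(\mathsf{vec}(\RR_j),\uu{j})^2\mn C_0\,(Q_{j+1}\uu{j},\uu{j})$. For the $\alpha_d$ terms we exploit the elementary identity
\[
\PP^{(d)} \ug \frac{1}{d+1}\,\partial_\tau\PP^{(d+1)} \ug \frac{1}{d+1}\sum_{k=1}^{d+1}\PP^{(d+1)}_{\wh{k}},
\]
which (cf.\ Example~\ref{Ex-dtPP}) is a proper decomposition of $\PP^{(d)}$ w.r.t.\ $\PP^{(d+1)}$, whence $(\mathsf{vec}(\PP^{(d)}),\uu{d})^2\mn C_0'(Q_{d+1}\uu{d},\uu{d})$. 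The inequality $2ab\mn a^2+b^2$ then controls each summand of $(\sQ\sB V,V)$ by a constant times $(Q_{j+1}\uu{j},\uu{j})+(Q_m\uu{m-1},\uu{m-1})$, respectively $(Q_{d+1}\uu{d},\uu{d})+(Q_d\uu{d-1},\uu{d-1})$, and summation delivers~\eqref{E-QQB}. The main conceptual point---and the reason the diagonal-block choice $\sQ=Q_1\oplus\cdots\oplus Q_m$ is the right one---is that every nonzero block of $\sB$ has its last-row coefficient vector in proper decomposition w.r.t.\ the polynomial $\PP^{(j+1)}$ attached to the block of $\sQ$ that sits one step to the right: the super-diagonal $\alpha_d$'s satisfy this algebraically via Example~\ref{Ex-dtPP}, while the $\rho_j$'s satisfy it by hypothesis.
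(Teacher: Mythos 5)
Your proposal is correct and follows essentially the same route as the paper: the first group of estimates is reduced blockwise to Proposition~\ref{P-Q} via Proposition~\ref{P-der P}, and the key bound \eqref{E-QQB} rests on the same two ingredients the paper uses, namely that the nonzero (last) rows of the blocks $\alpha_d,\rho_d$ of $\sB$ are $\mathsf{vec}$'s of polynomials admitting a proper decomposition w.r.t.\ $\PP^{(d+1)}$ (Example~\ref{Ex-dtPP} for $\alpha_d$, Hypothesis~\ref{H-ph} for $\rho_d$), so that Lemma~\ref{E-equi} controls them by $(Q_{d+1}\uu{d},\uu{d})$. The only difference is cosmetic bookkeeping: you apply Cauchy--Schwarz blockwise in the $Q_d$-forms and finish with $2ab\le a^2+b^2$, whereas the paper applies it once globally, bounding $|\sB V|$ by $C\,(\sQ V,V)^{1/2}$.
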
 \sm

\begin{proof} \ The first part of proof  
is an immediate consequence of~Propositions~\ref{P-Q} and~\ref{P-der P},
so we~omit it, and we prove  only \eqref{E-QQB}:
\sm \gb

\noi By Schwarz inequality and \eqref{E-QQbdd},
we can find a constant~$\,C_1$ such that
\begin{equation} \label{E-33}
\Bigl|\bigl(\sQ\sB\,V,V\bigr)\Bigr|
  \mn  \bigl(\sQ\sB\, V,\sB V\bigr)^{1/2}
        (\sQ V,V)^{1/2}    \mn  C_1\,\bigl|\sB V\bigr|\,
          (\sQ V,V)^{1/2} ,
\end{equation}
for all $\,V \eqv \transpose{(\vvv_0,\dotsc, \vvv_{m-1})}\in \R^{\mu}$,
where $\vvv_{d} \in \R^{d+1}.$
Therefore, by~\eqref{eq:B-storto} we  have:
\[
V   : \ug  \begin{pmatrix}
      \vvv_{0} \\ \vvv_1 \\
      \vdots  \\
      \vvv_{m-2} \\
   \vvv_{m-1}
         \end{pmatrix}  , \qquad
  \sB \, V
  \ug  \begin{pmatrix}
     \alpha_1 \, \vv{1}  \\
     \vdots  \\
     \alpha_{m-1} \, \vv{m-1}  \\
     \sum_{d=0}^{m-1} \rho_d \, \vv{d}
     \end{pmatrix} \, ,
\]
with
\[
\alpha_d \, \vv{d}
  \ug  \begin{pmatrix}
     0  \\
     \vdots  \\
     0  \\
     \mathsf{vec}(\PP^{(d)}) \cdot \vv{d}
     \end{pmatrix} \, ,
\qquad
\rho_d \, \vv{d}
  \ug  \begin{pmatrix}
     0  \\
     \vdots  \\
     0  \\
     \mathsf{vec}(\RR_d) \cdot \vv{d}
     \end{pmatrix} \, ,
\]
and hence
\[
|\sB \, V|
  \mn   \sum_{d=1}^{m-1}\ \bigl| \mathsf{vec}(\PP^{(d)}) \cdot \vv{d}\,\bigr|
        \+  \sum_{d=0}^{m-1}\, \bigl|\mathsf{vec}(\RR_d) \cdot \vv{d}\,\bigr| \, .
\]
Now, according to Example~\ref{Ex-dtPP},
$\,\PP^{(d)}$ has a proper decomposition w.r.t.~$\,\PP^{(d+1)}$,
whereas, by~Hypotesis~\ref{H-ph}, 
$\,\RR_d$ has a proper decomposition w.r.t.~$\PP^{(d+1)}$.
Hence, thanks to Lemma~\ref{E-equi} and~\eqref{E-sQuu},
we have
\begin{equation} \label{E-34}
|\sB \, V|
  	\mn  C_2 \sum_{d=0}^{m-1}\,(Q_{d+1} \vv{d} , \vv{d})^{1/2}
  \mn  C_2 \,(\sQ V,V)^{1/2} \, ,
\end{equation}
for some constant $C_2$, and
inserting the inequality~\eqref{E-34} in~\eqref{E-33}
we~obtain~\eqref{E-QQB}.
\end{proof}
\md \gb

\section{The energy estimate}
\sm

Thanks to the (Propositions \ref{P-der P} and \ref{P-QQ}),
we are in the position to prove an energy estimate for the solutions $\,U\eqv U(t,x)$  to  the system
\begin{equation} \label{E-eqvG}
U_t
  \ug  \sA(x)\,U_x \+ \sB(t,x)\,U \+ F(t,x) .
\end{equation}
Indeed,   defining the energy
\[
 \sE(t,U)
  :\ug  \frac{1}{2} \int_{I_t}
      \bigl(\sQ(x) U(t,x),U(t,x)\bigr)\,dx,
\]
where $\,\sQ(x)$ is the symmetrizer of $\,\sA(x)$ in \eqref{eq:Q-storto},  $\,I_t$ the real interval~\eqref{E-def It}, we prove: \sm

\begin{Proposition} \label{P-d=m}
Assume that  the operator $\,\LL$ in \eqref{eq:scalar-eq} satisfies 
the~Hypothesis~\ref{H-H}, \ref{H-CO} and \ref{H-ph}, and  the coefficients  $\,a_j(x)$ belong to $\, \mathcal{C}^\infty \cap L^\infty$. Then, 
 for each solution~$U$ to \eqref{E-eqvG}, 
  it~holds
the estimate:
\begin{equation} \label{E-systemg}
\sE(t,U)
  \mn  C\,\biggl\{ \sE(0,U)
       \+  \int_0^t
          \bigl\|F(s,\cdot)\bigr\|_{L^2(I_s)}^2 \, ds\biggr\} \, ,
\qquad
0\le t\le T_0 \, ,
\end{equation}
with
$\,T_0 := \min\,\bigl\{\rho_0/\taumax,\, T\bigr\}$
and $\,C$  independent of~$\,t$ and $x$.
\end{Proposition}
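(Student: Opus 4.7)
The plan is to differentiate the energy $\sE(t,U)$ in time, use the equation~\eqref{E-eqvG} together with the integration-by-parts/symmetrization trick available since $\sQ\sA$ is symmetric, control the resulting lower-order terms by the bounds collected in Proposition~\ref{P-QQ}, discard the boundary contributions (which are favorable thanks to the choice $\dot\rho(t)=-\taumax$), and finally close the estimate with Gronwall's lemma.

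Concretely, since $\sQ=\sQ(x)$ does not depend on $t$ and $I_t=[x_0-\rho(t),x_0+\rho(t)]$ with $\dot\rho=-\taumax$, the derivative splits as
\[
\frac{d}{dt}\sE(t,U)
  \ug  \int_{I_t}\bigl(\sQ\,U_t,U\bigr)\,dx
      \,-\,\frac{\taumax}{2}\Bigl[(\sQ U,U)\big|_{x_0+\rho}
           \+ (\sQ U,U)\big|_{x_0-\rho}\Bigr].
\]
Substituting \eqref{E-eqvG} and using the symmetry of $\sQ\sA$ to write
$2(\sQ\sA U_x,U)=\partial_x(\sQ\sA U,U)-((\sQ\sA)_x U,U)$, the interior $U_x$-term becomes a boundary contribution minus $\frac12\int_{I_t}((\sQ\sA)_xU,U)dx$. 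Combining at each endpoint the term $\pm\frac12(\sQ\sA U,U)$ with $-\frac{\taumax}{2}(\sQ U,U)$ and invoking~\eqref{E-QQA}, each of the two boundary contributions is $\le 0$ and can be dropped. This is the structural reason why the cone $I_t$ was chosen with slope $\taumax$, and I expect this bookkeeping at the moving endpoints to be the only delicate point of the argument.

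What remains is the interior inequality
\[
\frac{d}{dt}\sE(t,U)
  \mn  \frac12\int_{I_t}\bigl|((\sQ\sA)_xU,U)\bigr|\,dx
        \+ \int_{I_t}\bigl|(\sQ\sB U,U)\bigr|\,dx
        \+ \int_{I_t}\bigl|(\sQ F,U)\bigr|\,dx.
\]
Estimate~\eqref{E-QQAp} together with~\eqref{E-cQ sim Xi-x} bounds the first integrand by a constant times $(\sQ U,U)$; estimate~\eqref{E-QQB}, which is precisely where Hypothesis~\ref{H-ph} enters through the proper-decomposition lemma~\ref{E-equi}, bounds the second by $\Gamma_3(\sQ U,U)$. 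For the source term, Cauchy-Schwarz with respect to the positive semidefinite form $\sQ$ together with~\eqref{E-QQbdd} yields $|(\sQ F,U)|\le \Gamma_0^{1/2}|F|\,(\sQ U,U)^{1/2}$, so Young's inequality gives
\[
\int_{I_t}\bigl|(\sQ F,U)\bigr|\,dx
   \mn \sE(t,U) \+ C\,\|F(t,\cdot)\|_{L^2(I_t)}^2.
\]

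Collecting these bounds produces a differential inequality of the form
$\frac{d}{dt}\sE(t,U)\le C_*\,\sE(t,U)+C_*\,\|F(t,\cdot)\|_{L^2(I_t)}^2$ on $[0,T_0]$, where $C_*$ depends only on the constants in Proposition~\ref{P-QQ} (hence on $m$, $L$, $M$, and the $L^\infty$ bounds on the $r_{d,k}$). A standard application of Gronwall's lemma then delivers the announced inequality~\eqref{E-systemg}, with $C=C(C_*,T_0)$ independent of $t$ and $x$. The assumption $a_k\in\mathscr{C}^\infty\cap L^\infty$ is used exactly to guarantee that \eqref{E-adk} holds uniformly and hence that the constants $\Gamma_0,\Gamma_1,\Gamma_2,\Gamma_3$ from Proposition~\ref{P-QQ} are genuinely uniform in $x\in\R$, while the restriction $t\le T_0$ ensures $I_t\neq\emptyset$.
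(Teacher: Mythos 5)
Your proposal is correct and follows essentially the same route as the paper's proof: differentiating $\sE(t,U)$, using the symmetry of $\sQ\sA$ to integrate by parts, absorbing the boundary terms with the slope-$\taumax$ choice of $I_t$ via \eqref{E-QQA}, bounding the interior terms through \eqref{E-QQAp}, \eqref{E-cQ sim Xi-x}, \eqref{E-QQB} and \eqref{E-QQbdd}, and closing with Gr\"onwall. The only cosmetic difference is that the paper groups the boundary contributions as $I_1+I_4\le \frac{\Gamma_0}{\Gamma_1}\sE(t)$ rather than dropping them endpoint by endpoint, which amounts to the same estimate.
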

\gb

\begin{proof}
The proof is similar to~that
of Lemma~6.1 in~\cite{ST-JHDE}.
Differentiating~$\sE(t)$, we~find:
\[
\sE'(t,U) \ug  \int_{I_t}\bigl(\sQ(x) U_t, U\bigr)\,dx \+ I_4(t)
  \ug  I_1(t) \+ I_2(t) \+ I_3(t) \+ I_4(t) \, ,
\]
where
\begin{gather*}
I_1(t)
  :\ug  \int_{I_t}(\sQ \sA\, U_x , U)dx  ,
\quad
I_2(t)
  :\ug  \int_{I_t}(\sQ \sB \,U, U)dx ,
\quad
I_3(t)
  :\ug  \int_{I_t}(\sQ F, U)dx  ,  
  \end{gather*}
  and
\[
I_4(t)
  :\ug  - \frac{\taumax}{2}\,
        \bigg\{(\sQ U, U)\bigm|_{x=x_0+\rho(t)}
               + (\sQ U, U)\bigm|_{x=x_0-\rho(t)}\bigg\} \, .
\]
\gb
\noi Since the matrix~$\,\sQ(x)\sA(x)$ is~symmetric,
the identity
\[
\bigl((\sQ \sA)U,U\bigr)_x
  \ug  \bigl((\sQ \sA)_xU,U\bigr)
     \+  2 \bigl((\sQ \sA)U_x,U\bigr)
\]
holds true; hence, recalling~\eqref{E-QQA}, \eqref{E-QQAp}
and~\eqref{E-cQ sim Xi-x}, we derive
\begin{align*}
I_1(t)
&  \ug \, \frac{1}{2}  \, \bigg\{(\sQ \sA U, U)
                  \bigm|_{x=x_0+\rho(t)}
        -\,  (\sQ \sA U, U)\bigm|_{x=x_0-\rho(t)}\bigg\}
        \,-\,  \frac{1}{2}  \,
           \int_{I_t}\bigl((\sQ \sA)_xU,U\bigr)dx \\
&  \mn \,  \frac{\taumax}{2}\,
              \bigg\{(\sQ \, U, U)\bigm|_{x=x_0+\rho(t)}
               \+ (\sQ \, U, U)\bigm|_{x=x_0-\rho(t)}\bigg\}
               \+\,  \frac{\Gamma_0}{\Gamma_1} \,
                  \sE(t)  .
\end{align*}
Consequently:
\[
I_1(t) \+ I_4(t)
  \mn  \,\frac{\Gamma_0}{\Gamma_1} \, \sE(t) .
\]
On the~other hand, by~Schwarz' inequality and~\eqref{E-QQbdd},
we~have
\begin{align*}
I_2(t) &\mn \Gamma_3\, \sE(t),\\
I_3(t)
&  \mn  \Bigl\{\int_{I_t}(\sQ F , F)dx\,\Bigr\}^{1/2} \sqrt{2 \, \sE(t)\,}
  \mn  \frac{1}{2} \, \int_{I_t}(\sQ F , F)dx \+ \sE(t) \\
&   \mn  \frac{\Gamma_0}{2} \,
        \bigl\|F(t,\cdot)\bigr\|_{L^2(I_t)}^2
        \+ \sE(t) .
\end{align*}
Summing up, we~find a constant
$\,C=C(\Gamma_j,d_0)
  \equiv C(m,L,M,d_0)$ for which
  \[
\sE' (t,U)
  \mn  C \, \biggl\{ \sE(0,U)
       \+  \bigl\|F(t,\cdot)\bigr\|_{L^2(I_t)}^2 \biggr\} ,
\qquad
0\le t\le T_0 ,
\]
so, by~Gr\"{o}nwall Lemma, 
we~get~\eqref{E-systemg}.
\end{proof}
\sm

The next goal will be to derive from  \eqref{E-systemg} an estimate for the Sobolev norms of the solution $\,U(t,x)$. This is not easy because the symmetrizer $\,\sQ(x)$ is not coercive.
First of all we observe that 
\[
\sE(t,U)
  \mg  \frac{\Gamma_1}{2} \, \sum_{d=0}^{m-1} \,\int_{I_t}(\Psi_d(t) \,\uu{d} , \uu{d})\,dx,
\]
and hence, since each $\,\Psi_d$ is a diagonal matrix
with  $\,(d,d)$-entry  $1$,
\[
\sE(t,U)
  \mg  \frac{\Gamma_1}{2}\,\sum_{d=0}^{m-1}\, \int_{I_t}\bigr|u_{d,d}\bigl|^2dx
  \ug  \frac{\Gamma_1}{2}\, \sum_{d=0}^{m-1}\,\int_{I_t}\bigl|\partial_t^d u \bigr|^2dx  .
\]
Thus~\eqref{E-systemg} provides an estimate for the time derivatives
of~the solution $u$ to~\eqref{E-scalar}-\eqref{E-data}:
\begin{equation} \label{eq:time-derivatives}
\ \sum_{d=0}^{m-1}\,\bigl\|\partial_t^d u(t,\cdot)\bigr\|_{L^2(I_t)}^2
  \le  C\,\Bigl\{ \sum_{j=0}^{m-1} \bigl\|\partial_t^d u(t,0)\bigr\|_{L^2(I_0)}
       +  \int_0^t
          \bigl\|f(s,\cdot)\bigr\|_{L^2(I_s)}^2 ds\Bigr\} ,
\quad
0\le t\le T_0 \, .
\end{equation}
In \S 5 we derive   a similar estimate for the {\it space derivatives}, by proving that   $\,\de_x^ku$  is a solution of 
an equation, like~\eqref{E-scalar}, which  satisfies
 the Hypothesis~\ref{H-H}, \ref{H-CO} and \ref{H-ph}.
\sm \gb

\section{Estimate of the space-derivatives}

We have
\[
\partial_x \bigl\{\PP(x\pv\partial_t,\partial_x)u\bigr\}
  \ug  \PP(x\pv\partial_t,\partial_x)u_x
     \+ \PP'(x\pv\partial_t,\partial_x) u_x,
\]
where $\,\PP'(x\pv\partial_t,\partial_x)$ is the differential operator of~order $m-1$
with symbol
\begin{equation}\label{eq:P-primo}
\PP'(x\pv\tau,\xi)
  :\ug  \frac{1}{\xi} \, \partial_x \PP(x\pv \tau,\xi)
   \equiv\,   \sum_{j=1}^m a_j'(x) \tau^{m-j}\,\xi^{j-1} .
\end{equation}
\gb
\noi Similarly,  recalling \eqref{eq:lot}, we find:
\[
\partial_x \bigl\{\RR_d(t,x \pv\partial_t,\partial_x)u\bigr\}
  \ug  \RR_d(t,x\pv\partial_t,\partial_x)u_x
     \+ \partial_x r_{d,0}(t,x) \de_t^d u
     \+ \RR_d'(t,x \pv\partial_t,\partial_x)u_x,
\]
where $\,\RR_d'(t,x\pv\partial_t,\partial_x)$ is the differential operator of~order $\,d-1$
with symbol
\begin{equation} \label{eq:Rd-prime}
\RR_d'(t,x\pv\tau,\xi)
  :\ug   \frac{1}{\xi} \, \partial_x \Bigl\{\RR_d(t,x \pv\tau,\xi) - r_{d,0}(t,x) \tau^d\Bigr\}
   \equiv  \sum_{j=1}^d \partial_x r_{d,j}(t,x) \tau^{d-j}\,\xi^{j-1} .
\end{equation}
Differentiating~\eqref{E-scalar} w.r.t.~$x$,
we see that $\,u_x$ satisfies the equation 
\begin{equation} \label{E-scalar1}
L^{(1)}(t,x\pv\partial_t,\partial_x)u_x
  \ug  f^{(1)}(t,x),
\end{equation}
where
\begin{equation} \label{E-L1}
L^{(1)}(t,x\pv\partial_t,\partial_x)
  :\ug   \PP(x\pv\partial_t,\partial_x)
  \,-\,  \sum_{d=0}^{m-1} \RR_d^{(1)}(t,x\pv \partial_t,\partial_x) \end{equation}
and
\[
\RR_d^{(1)}(t,x\pv\tau,\xi)
  :\ug  \begin{cases}
      \RR_{m-1}(t,x \pv\tau,\xi) \,-\, \PP'(x\pv\tau,\xi)
      &  \text{if~$d=m-1$} \, ,  \\
      \RR_d(t,x\pv\tau,\xi) \+ \RR_{d+1}'(t,x\pv\tau,\xi) \,
      &  \text{if~$d=1,\dotsc,m-2$} \, ,
\end{cases}
\]
\[
f^{(1)}(t,x)
  :\ug \partial_x f(t,x) \,-\, \sum_{d=0}^{m-1} \partial_x r_{d,0}(t,x) \partial_t^d u .
\]
Since the principal symbol of~\eqref{E-scalar1}
is the same of~that of~\eqref{E-scalar},
Hypothesis~\ref{H-H} and \ref{H-CO} for the operator $\,L^{(1)}(t,x\pv \de_t, \de_x)$ are fulfilled.
 Thus, we check that $\,L^{(1)}$ satisfies  Hypothesis~\ref{H-ph}, i.e.,
the polynomial~$\,\RR_d^{(1)}(t,x\pv\tau,\xi)$
has a proper decomposition w.r.t.\ $\,\partial_\tau^{m-d} \PP(x\pv\tau,\xi)$.
\sm

\noi Now, according to Example~\ref{Ex-dtPP},  the polynomial
$\,\PP'(x\pv\tau)$ in  \eqref{eq:P-primo} has a proper decomposition w.r.t. $\PP(x\pv\tau)$, while $\,\RR_d(t,x\pv \de_t, \de_x)$ has a proper decomposition w.r.t. $\,\PP^{(d)}(x\pv\tau)$, 
hence it remains only to prove that
\begin{equation} \label{eq:remains}
\RR_{d+1}'(t,x\pv\tau)
\ \  \text{\it has a proper decomposition w.r.t.} \ \PP^{(d)}(x\pv\tau), \quad d=1, \dotsc, m-1.
\end{equation}
More precisely, we prove:
\begin{Proposition} \label{P-de k b}
Let $\,\PP(x \pv\tau,\xi)$ be a~hyperbolic polynomial in $(\tau,\xi)$ of~degree~$\,m$
with characteristic roots verifying~\eqref{E-CO},
and~$\,\RR(t,x \pv\tau,\xi)$  an homogeneous polynomial in $\,(\tau,\xi)$ of~degree~$\,m-1$ having a proper decomposition w.r.t.~$\,\PP$.
Then, the polynomial
\[
\RR'(t,x\pv\tau,\xi)
  :\ug  \frac{1}{\xi} \,
      \partial_x \Bigl\{\RR(t,x\pv\tau,\xi)
              \,-\,r_0(t,x) \tau^{m-1}\Bigr\} ,
\]
where $\,r_0$ is the leading coefficient of~$\,\RR$,
 has a proper decomposition w.r.t. $\,\partial_\tau \PP$.
 Consequently \eqref{eq:remains} holds true and the operator $\, L^{(1)}$ in \eqref{E-L1} satisfies also
  Hypothesis \ref{H-ph}.
\end{Proposition}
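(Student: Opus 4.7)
The plan is to combine the given proper decomposition $\RR = \sum_{k=1}^m \ell_k \PP_{\hat{k}}$ (with $\ell_k\in L^\infty$) with Bron\v{s}te\u{\i}n's Lipschitz bound $|\tau_h'|\le C$ and the identity $\partial_x \PP_{\hat{k}} = -\xi\sum_{h\neq k}\tau_h'\PP_{\hat{k,h}}$ from Example~\ref{Ex-dtPP}, in order to express $\RR'$ as a combination of bi-reduced polynomials $\PP_{\hat{k,h}}$ (with bounded scalar coefficients) plus a residual that a key algebraic identity will reduce to the same form. Then, a sub-lemma parallel to Lemma~\ref{L-pq}, using condition \eqref{E-CO} applied to $\partial_\tau\PP$ (available thanks to Proposition~\ref{P-der P}) and Peyser's interlacing \eqref{E-Peyser}, will show that each $\PP_{\hat{k,h}}$ admits a proper decomposition with respect to $\partial_\tau\PP$ with uniformly bounded coefficients, yielding the required decomposition of $\RR'$.

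Concretely, since $r_0 = \sum_k \ell_k$ (matching leading coefficients), subtracting $(\partial_x r_0)\tau^{m-1}$ from $\partial_x\RR$ kills the top-order $\tau^{m-1}$ term, and after dividing by $\xi$ one obtains
\[
\RR'(t,x\pv\tau,\xi) \;=\; \sum_{k=1}^m (\partial_x\ell_k)\,U_k(x\pv\tau,\xi) \;-\; \sum_{\substack{k,h=1\\ k\neq h}}^{m} \ell_k\,\tau_h'\,\PP_{\hat{k,h}}(x\pv\tau,\xi),
\]
where $U_k := (\PP_{\hat{k}} - \tau^{m-1})/\xi$ is a polynomial of degree $m-2$ in $(\tau,\xi)$. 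The second sum has coefficients in $L^\infty$, so only the first sum requires further work.

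The algebraic identity $\PP_{\hat{k}} - \PP_{\hat{k'}} = (\tau_k - \tau_{k'})\,\xi\,\PP_{\hat{k,k'}}$ translates immediately into $U_k - U_{k'} = (\tau_k - \tau_{k'})\PP_{\hat{k,k'}}$, and choosing an arbitrary reference index $k'=1$ gives
\[
\sum_{k=1}^m (\partial_x\ell_k)\,U_k \;=\; (\partial_x r_0)\,U_1 \;+\; \sum_{k=2}^m (\partial_x\ell_k)(\tau_k-\tau_1)\,\PP_{\hat{1,k}}.
\]
Here $\partial_x r_0 \in L^\infty$ by the assumed smoothness of the coefficients of $\RR$, and $U_1$ is a fixed polynomial of degree $m-2$ whose proper decomposition w.r.t.\ $\partial_\tau\PP$ reduces to a direct Newton--Lagrange/CO estimate (since, by \eqref{E-CO} for $\partial_\tau\PP$, $\prod_{i\neq\ell}|\lambda_\ell-\lambda_i|\ge |\lambda_\ell|^{m-2}/C$, matching the degree of $U_1$).

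The main obstacle will be controlling the products $(\partial_x\ell_k)(\tau_k-\tau_1)$: although $\partial_x\ell_k$ can be singular at points where characteristic roots of $\PP$ coalesce, the vanishing factor $\tau_k-\tau_1$ compensates exactly where $\ell_k$ is ambiguous. The plan is to show that $\ell_k(\tau_k-\tau_1)\in L^\infty\cap W^{1,\infty}$ — in strictly hyperbolic regions this is immediate, since Newton--Lagrange gives $\ell_k(\tau_k-\tau_1)=\RR(\tau_k)/\PP_{\hat{1,k}}(\tau_k)$, a smooth bounded function; the weakly hyperbolic case follows from Nuij-type approximation with uniform bounds, in the spirit of the paper's sketch. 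Once this is established, the identity $(\partial_x\ell_k)(\tau_k-\tau_1) = \partial_x[\ell_k(\tau_k-\tau_1)] - \ell_k(\tau_k'-\tau_1')$ produces $L^\infty$ coefficients in front of the bi-reduced polynomials $\PP_{\hat{1,k}}$, and assembling everything via the sub-lemma on proper decompositions of $\PP_{\hat{k,h}}$ and $U_1$ yields the desired proper decomposition of $\RR'$ w.r.t.\ $\partial_\tau\PP$.
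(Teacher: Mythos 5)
There is a genuine gap, and it sits exactly at the point you flag as ``the main obstacle.'' Your whole computation starts by differentiating the decomposition $\RR=\sum_k\ell_k\,\PP_{\wh{k}}$ in $x$, but the $\ell_k$ are only known to be in $L^\infty$: at points where roots of $\PP$ coalesce they are not even canonically defined, and nothing in the hypotheses makes them differentiable, so the terms $(\partial_x\ell_k)$ are not available objects. Your repair --- that $\ell_k\,(\tau_k-\tau_1)\in W^{1,\infty}$ with uniform bounds, so that $(\partial_x\ell_k)(\tau_k-\tau_1)=\partial_x\bigl[\ell_k(\tau_k-\tau_1)\bigr]-\ell_k(\tau_k'-\tau_1')$ is bounded --- is precisely the hard statement, and it is asserted, not proved: in the strictly hyperbolic region the Newton--Lagrange expression $\RR(\tau_k)/\PP_{\wh{1},\wh{k}}(\tau_k)$ still has the denominator $\prod_{j\ne 1,k}(\tau_k-\tau_j)$, which vanishes when $\tau_k$ meets some other root, so boundedness of the function (which does follow from $|\ell_k|\le C$) in no way gives a uniform Lipschitz bound; and the appeal to ``Nuij-type approximation with uniform bounds'' is a gesture at the crux rather than an argument. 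The paper's proof is designed to avoid this issue altogether: by Lemma~\ref{L-Fisk}-(2) one rewrites $\RR=\zeta\,\partial_\tau\PP+\bigl(r_0-m\zeta\bigr)\wt{\PP}_\zeta$ with $\wt{\PP}_\zeta$ a \emph{monic hyperbolic} polynomial interlaced with $\PP$, so that $\partial_x$ only ever falls on hyperbolic polynomials (where Bron\v{s}te\u{\i}n's lemma and \eqref{E-Px Ph} apply) and on the genuine coefficient $r_0$, never on the non-canonical $\ell_k$'s; the three resulting terms are then handled by Lemmas~\ref{L-tau dec}, \ref{L-inter2} and \ref{L-interrev}.

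A secondary flaw: your justification that $U_1=(\PP_{\wh{1}}-\tau^{m-1})/\xi$ has a proper decomposition w.r.t.\ $\partial_\tau\PP$ via the bound $\prod_{i\ne\ell}|\lambda_\ell-\lambda_i|\ge|\lambda_\ell|^{m-2}/C$ does not work as stated, because the Newton--Lagrange coefficients involve $U_1(\lambda_\ell)$, which need not be $O\bigl(|\lambda_\ell|^{m-2}\bigr)$ (condition \eqref{E-CO} allows all roots to collapse to $0$ simultaneously, while $U_1(0)$ need not vanish); a generic polynomial of degree $m-2$ does \emph{not} properly decompose w.r.t.\ $\partial_\tau\PP$. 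What saves $U_1$ is its special structure: both $\PP_{\wh{1}}$ and (by Lemma~\ref{L-tau dec}, which uses \eqref{E-CO} through the weak coercivity \eqref{E-weakcoercivity}) $\tau^{m-1}$ decompose properly w.r.t.\ $\PP$, so their difference, being of degree $\le m-2$, admits a proper decomposition of \emph{second} order w.r.t.\ $\PP$ by Lemma~\ref{L-inter2}-(1), and only then does the Peyser-based Lemma~\ref{L-interrev} convert this into a proper decomposition w.r.t.\ $\partial_\tau\PP$. So even the parts of your plan that are salvageable require the second-order-decomposition machinery of the paper rather than the direct root-separation estimate you invoke.
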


\noi Consequently, we can apply Proposition \ref{P-d=m} to the equation \eqref{E-scalar1} to estimate  the $x$-derivative of the solutions $\,u$ of \eqref{eq:scalar-eq}. 
Summing up we  obtain: 
\begin{Corollary}
\noi If the operator $\,L(t,x\pv \de_t, \de_x)$ in  equation \eqref{eq:scalar-eq} verifies the Hypothesis~\ref{H-H}, \ref{H-CO} and~\ref{H-ph}, then also  $\,L^{(1)}(t,x\pv \de_t, \de_x)$ in \eqref{E-L1} verifies these Hypothesis. \\
As a consequence  every smooth solution $\,u(t,x)$ of \eqref{eq:scalar-eq} satisfies an apriori estimate like \eqref{eq:time-derivatives} with the $\,H^1$-\,norm in place of the $\,L^2$-\,norm.
Iterating, we derive an apriori estimate for any $\,H^k$-\,norm of $\,u$. 
\end{Corollary}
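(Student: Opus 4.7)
The plan has three steps: verify that $L^{(1)}$ inherits the three hypotheses, deduce the $H^1$ estimate by applying Proposition~\ref{P-d=m} to $u_x$, and iterate in $k$.

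\textbf{Hypotheses for $L^{(1)}$.} Reading \eqref{E-L1}, the principal symbol of $L^{(1)}$ is still $\PP(x\pv\tau,\xi)$, so Hypotheses~\ref{H-H} and~\ref{H-CO} are inherited verbatim from $\LL$. For Hypothesis~\ref{H-ph}, I check case-by-case that $\RR_d^{(1)}$ admits a proper decomposition w.r.t.~$\de_\tau^{m-1-d}\PP(x\pv\tau)$. When $d=m-1$, $\RR_{m-1}^{(1)}=\RR_{m-1}-\PP'$: the first summand works by Hypothesis~\ref{H-ph} for $\LL$, the second by Example~\ref{Ex-dtPP}. For $1\le d\le m-2$, $\RR_d^{(1)}=\RR_d+\RR_{d+1}'$; the first term has the required decomposition by Hypothesis~\ref{H-ph} for $\LL$, while Proposition~\ref{P-de k b} applied to $\RR_{d+1}$ (which, by Hypothesis~\ref{H-ph}, has proper decomposition w.r.t.~$\de_\tau^{m-2-d}\PP$) produces $\RR_{d+1}'$ with proper decomposition w.r.t.~$\de_\tau(\de_\tau^{m-2-d}\PP)=\de_\tau^{m-1-d}\PP$. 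The case $d=0$ is automatic, as $\RR_0^{(1)}$ has degree zero in $\tau$.

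\textbf{$H^1$ estimate.} Since $L^{(1)}$ satisfies the three hypotheses, Proposition~\ref{P-d=m} applied to the first-order system associated to \eqref{E-scalar1} yields the analogue of \eqref{eq:time-derivatives} for $u_x$:
\[
\sum_{d=0}^{m-1}\bigl\|\de_t^d u_x(t,\cdot)\bigr\|_{L^2(I_t)}^2
  \mn  C\,\biggl\{\sum_{d=0}^{m-1}\bigl\|\de_t^d u_x(0,\cdot)\bigr\|_{L^2(I_0)}^2
        +\int_0^t\bigl\|f^{(1)}(s,\cdot)\bigr\|_{L^2(I_s)}^2\,ds\biggr\}.
\]
The new forcing $f^{(1)}=\de_x f-\sum_{d=0}^{m-1}(\de_x r_{d,0})\,\de_t^d u$ is controlled by
\[
\bigl\|f^{(1)}(s,\cdot)\bigr\|_{L^2(I_s)}
  \mn  \bigl\|\de_x f(s,\cdot)\bigr\|_{L^2(I_s)}
       +C_0\sum_{d=0}^{m-1}\bigl\|\de_t^d u(s,\cdot)\bigr\|_{L^2(I_s)},
\]
where $C_0$ majorises the $L^\infty$ norms of $\de_x r_{d,0}$; the last sum is absorbed via the already-available bound \eqref{eq:time-derivatives} for $u$. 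Combining with \eqref{eq:time-derivatives} itself, every $\de_t^j\de_x^i u$ with $i\le 1$ is controlled in $L^2(I_t)$, which is precisely the $H^1$ version of \eqref{eq:time-derivatives}.

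\textbf{Iteration.} Since $L^{(1)}$ itself satisfies Hypotheses~\ref{H-H}, \ref{H-CO}, \ref{H-ph}, the construction of step one applies again, producing $L^{(2)}$ governing $u_{xx}$, and inductively $L^{(k)}$ governing $\de_x^k u$ with forcing $f^{(k)}$ that involves $\de_x^k f$ together with $x$-derivatives of the coefficients $r_{d,j}$ acting on lower-order derivatives of $u$ already estimated at step $k-1$. Chaining the $L^2$ estimates produced by Proposition~\ref{P-d=m} at each level then yields the announced $H^k$ bound.

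\textbf{Main obstacle.} The delicate point is precisely the preservation of the proper-decomposition structure under iteration: one must verify that the transformation $\RR_d\mapsto\RR_d^{(1)}$ realises the correct index shift $\de_\tau^{m-1-d}\PP\to\de_\tau^{m-d}\PP$, and that this persists through $k$ applications. This is exactly what Proposition~\ref{P-de k b} guarantees (combined with Example~\ref{Ex-dtPP} for the $\PP'$ contribution). The remaining bookkeeping is the $L^\infty$ control of $\de_x^k r_{d,j}$ and $a_j^{(k)}$, supplied by the smoothness of $a_j$ in Hypothesis~\ref{H-H} and by the implicit regularity required on the lower-order coefficients for the $H^k$ conclusion to make sense.
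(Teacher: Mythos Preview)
Your proof is correct and follows essentially the same route as the paper: the Corollary in the paper is not given a separate proof, but is deduced from the discussion preceding it (same principal symbol $\Rightarrow$ Hypotheses~\ref{H-H} and~\ref{H-CO}; Example~\ref{Ex-dtPP} for $\PP'$ and Proposition~\ref{P-de k b} for $\RR_{d+1}'$ $\Rightarrow$ Hypothesis~\ref{H-ph}), after which Proposition~\ref{P-d=m} applied to~\eqref{E-scalar1} gives the $H^1$ bound and iteration gives $H^k$. Your write-up makes explicit two points the paper leaves implicit---that Proposition~\ref{P-der P} is needed so that $\de_\tau^{m-2-d}\PP$ itself satisfies~\eqref{E-CO} before invoking Proposition~\ref{P-de k b}, and that $\|f^{(1)}\|_{L^2}$ is absorbed via the already established estimate~\eqref{eq:time-derivatives}---which is fine. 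One minor slip: in your ``Main obstacle'' paragraph the index shift is misstated; there is no shift for $\RR_d$ itself (both $\RR_d$ and $\RR_d^{(1)}$ decompose w.r.t.\ $\de_\tau^{m-1-d}\PP$), the shift $\de_\tau^{m-2-d}\PP\to\de_\tau^{m-1-d}\PP$ occurs only for the $\RR_{d+1}'$ summand, exactly as you wrote correctly in the body of the argument.
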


\noi The proof of  Proposition \ref{P-de k b}
is not very direct and needs some preliminary Lemmas.
We start by fixing notation and terminology.
To simplify the presentation, throughout this section 
we~omit the~$t$ and $\xi$ variables, and we  consider the polynomials:
\[
 \PP(x\pv\tau)
   \ug  \sum_{h=0}^m a_h(x) \, \tau^{m-h}
   \equiv  \prod_{j=1}^m \bigl(\tau-\tau_j(x)\bigr) \, ,
\]
where $\,\{\tau_1(x)\le\dotsb\le\tau_m(x)\}$ are real valued functions, and
\[
 \RR(x\pv\tau)
   \ug  \sum_{h=0}^{m-1} r_h(x) \, \tau^{m-1-h}
   \equiv  \prod_{j=1}^{m-1} \bigl(\tau-\lambda_j(x)\bigr) \, .
\]
The  $\lambda_j$'s are not necessarly real valued functions.
\sm

We recall that~$\,\RR$
has a~\emph{proper decomposition \textup(of first order\textup) w.r.t.~$\,\PP$}
if
\begin{equation} \label{E-dec 1}
\RR(x;\tau)
  \ug  \sum_{h=1}^m \, \ell_h(x) \, \PP_{\wh{h}}(x\pv\tau)  
\ \ \quad
\text{for some $\,\ell_h\in L^\infty(\R)$} \, ,
\end{equation}
where the $\PP_{\wh{h}}$ are
the~\emph{reduced polynomials} of~$\,\PP$ defined in~\eqref{eq:reduced}.

\noi Analogously,
we say that a polynomial $\,\RR$ of~degree~$\,\le m-2\,$
has a~\emph{proper decomposition of~second order w.r.t.~$\,\PP$}
if
\[
\RR(x;\tau)
  \ug  \sum_{1\le h<k\le m} \ell_{h,k}(x) \, \PP_{\wh{h},\wh{k}}(x;\tau)
\ \ \quad
\text{for some $\, \ell_{h,k}\in L^\infty(\R)$},
\]
where the $\PP_{\wh{h},\wh{k}}$ are
the~\emph{bi-reduced polynomials of~$\,\PP$}, that is:
\begin{equation} \label{E-bired}
\PP_{\wh{h},\wh{k}}(x \pv\tau)
  :\ug  \prod_{\substack{\ell=1,\dotsc,m \\ \ell\notin\{h,k\}}}
      \bigl(\tau-\tau_\ell(x)\bigr) \, .
\end{equation}
If also the roots $\,\lambda_j$  of~$\,\RR$ are real,
we say that the polynomials $\,\PP$ and $\,\RR\,$ are \emph{interlaced}
if
\begin{equation} \label{E-inter2}
\tau_j(x)
  \mn  \lambda_j(x)
  \mn  \tau_{j+1}(x),\qquad \text{for all} \ x\in\R, \  j=1,\dotsc,m-1.
\end{equation}

Now we give a series of Lemmas
that will be used to prove Proposition~\ref{P-de k b}.

\begin{Lemma} \label{L-tau dec}
If~$\,\PP(x \pv\tau)$ verifies Hypothesis~\ref{H-CO},
then $\,\tau^{m-1}$ has a~proper decomposition w.r.t.~$\,\PP(x\pv\tau)$.
\end{Lemma}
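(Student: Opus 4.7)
The plan is to apply Lemma~\ref{E-equi} with $\RR(x\pv\tau) \eqv \tau^{m-1}$. Since $\mathsf{vec}(\tau^{m-1}) \ug (1,0,\dotsc,0) \in \R^m$, this reduces the task to producing a constant $C_0 \eqv C_0(m,M)$, independent of $x$, such that
\[
v_1^{\,2} \;\mn\; C_0\,\bigl(Q(x)\,\vvv,\vvv\bigr),
\qquad \vvv\ug(v_1,\dotsc,v_m)\in\R^m,\ x\in\R,
\]
after which Lemma~\ref{E-equi} directly produces the bounded $\ell_k\in L^\infty(\R)$ realizing the decomposition.

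At any point $x$ where the roots $\tau_1(x)<\dotsb<\tau_m(x)$ are pairwise distinct, the Newton--Lagrange formula (cf.\ Remark~\ref{R-1}) gives the unique representation
\[
\tau^{m-1} \;\ug\; \sum_{k=1}^m \ell_k(x)\,\PP_{\wh{k}}(x\pv\tau),
\qquad
\ell_k(x) \;\ug\; \frac{\tau_k(x)^{m-1}}{\prod_{j\ne k}\bigl(\tau_k(x)-\tau_j(x)\bigr)}.
\]
Hypothesis~\ref{H-CO} implies $|\tau_k|\mn\sqrt{M}\,|\tau_k-\tau_j|$ for every $j\ne k$, hence $|\tau_k|^{m-1}\mn M^{(m-1)/2}\prod_{j\ne k}|\tau_k-\tau_j|$ and $|\ell_k(x)|\mn M^{(m-1)/2}$. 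Reading the above identity in coefficient space as $e_1 \ug \sum_k \ell_k(x)\,\wwm{k}(x)$, pairing with $\vvv$, and invoking Cauchy--Schwarz together with \eqref{E-wcdot} delivers
\[
v_1^{\,2}
 \;\mn\; m\,M^{m-1} \sum_{k=1}^m \bigl(\wwm{k}(x),\vvv\bigr)^2
 \;\ug\; m\,M^{m-1}\,\bigl(Q(x)\,\vvv,\vvv\bigr).
\]

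To extend the bound to $x$ where some roots coincide, I would argue by perturbation. Under Hypothesis~\ref{H-CO} any coincident pair must vanish, since $\tau_j \ug \tau_k$ forces $\tau_j^2+\tau_k^2 \mn 0$. Fix $x$ and set $\tau_j^{(\e)}(x) :\ug \tau_j(x)+j\e$; for $\e>0$ small these are distinct, and a short computation---using $\tau_j^2+\tau_k^2 \mn M(\tau_k-\tau_j)^2$ together with $(s+r)^2\mg s^2+r^2$ applied to the non-negative quantities $s\ug \tau_k-\tau_j$ and $r\ug(k-j)\e$---shows that the perturbed polynomial $\PP^{(\e)}(x\pv\tau) \ug \prod_j(\tau-\tau_j^{(\e)}(x))$ satisfies Hypothesis~\ref{H-CO} with some constant $M'\eqv M'(m,M)$ depending only on $m$ and $M$ (and in particular independent of $\e$ and of the original roots). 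Applying the distinct-root step to $\PP^{(\e)}$ yields $v_1^{\,2}\mn m(M')^{m-1}(Q^{(\e)}(x)\vvv,\vvv)$; since $Q^{(\e)}(x)\to Q(x)$ as $\e\to 0$ by polynomial dependence on the roots, the bound passes to the limit and Lemma~\ref{E-equi} concludes. The one delicate point is the uniform control of the Hypothesis~\ref{H-CO} constant for $\PP^{(\e)}$, but because (E-CO) concentrates all degeneracies at the origin---where the offending ratio reduces to the bounded quantity $(j^2+k^2)/(k-j)^2$---this is a routine verification rather than a genuine obstacle.
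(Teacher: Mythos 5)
Your argument is correct, but it takes a genuinely different route from the paper's, whose proof is two lines: with the orientation of $\mathsf{vec}$ actually used in building $Q$ (leading coefficient in the \emph{last} slot, the convention under which \eqref{eq:eigenvectors}, \eqref{E-Q sim Psi-x} and \eqref{E-weakcoercivity} hold), one has $\bigl(\mathsf{vec}(\tau^{m-1}),\vvv\bigr)=v_m$, so the bound \eqref{E-Ru} for $\RR=\tau^{m-1}$ is precisely the weak coercivity \eqref{E-weakcoercivity}, itself an immediate consequence of the nearly-diagonal estimate \eqref{E-Q sim Psi-x} of Proposition~\ref{P-Q}, and Lemma~\ref{E-equi} concludes. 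You instead re-derive the needed quadratic-form inequality from scratch: at simple-root points you bound the Newton--Lagrange coefficients by $M^{(m-1)/2}$ via $|\tau_k|\mn\sqrt{M}\,|\tau_k-\tau_j|$, and you treat multiple roots (which under \eqref{E-CO} can only occur at zero) by the perturbation $\tau_j\mapsto\tau_j+j\e$, checking that \eqref{E-CO} survives with a constant depending only on $m,M$ (indeed $2M+4m^2$ works) and passing to the limit $Q^{(\e)}\to Q$ before invoking Lemma~\ref{E-equi}. All steps check out, and your route has the merit of being self-contained, giving the explicit bound $|\ell_k|\mn M^{(m-1)/2}$ where the roots are simple; its cost is that it essentially re-proves, for this particular $\RR$, what Proposition~\ref{P-Q} already supplies. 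One bookkeeping caution: you take $\mathsf{vec}(\tau^{m-1})=(1,0,\dotsc,0)$, following the literal wording of the Section~2 notation, while the paper's proof uses $(0,\dotsc,0,1)$ (leading coefficient last), which is the reading consistent with $\Psi$ having its $1$ in the last diagonal entry; your proof is internally consistent because you orient the $\wwm{k}$'s the same way and the notion of proper decomposition is orientation-free, but the inequality $v_1^{\,2}\mn C_0\,(Q\vvv,\vvv)$ must not be read against the $Q$ normalized as in Proposition~\ref{P-Q} (for $\PP=\tau^2$, $m=2$, that $Q$ annihilates $e_1$), so the component being controlled there is really $v_m$.
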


\begin{proof}
As $\,\mathsf{vec}(\tau^{m-1}) = (0,\dotsc,0,1)$,
we have
\[
\bigl(\,\mathsf{vec}(\tau^{m-1}) \,,\, v\bigr) \ug v_m \, ,
\qquad
\text{for any $v=(v_1,\dotsc,v_m)^T\in\R^m$} \, .
\]
Thus the proof follows from~\eqref{E-weakcoercivity} and Lemma~\ref{E-equi}.
\end{proof}

In the next Lemma we consider the relation between
proper decomposition and  interlacing property.
\gb

\begin{Lemma} \label{L-Fisk} \hfill
\begin{enumerate}
\item  Let the leading term of~$\,\RR$ be positive.
       Then~$\,\PP$ and $\,\RR$ are interlaced
       if and only if
       $\,\RR$ has a~proper decomposition w.r.t.~$\,\PP$
       with coefficients $\,\ell_h(x)$ in~\eqref{E-dec 1}  positive.
       \sm

\item  Let $\,\RR$ have a~proper decomposition w.r.t.~$\,\PP$.
       Then $\,\RR$ is the sum of~two hyperbolic polynomials of~degree~$m-1$
       both interlaced with~$\,\PP$.
More precisely,
       there exists a constant $\,\zeta>0$ and a monic hyperbolic polynomial $\,\wt{\PP}_\zeta(x;\tau)$
       interlaced with $\,\PP(x;\tau)$, such that
              \begin{equation} \label{E-RPt}
       \RR(x;\tau)
         \ug  \zeta\, \partial_\tau\PP(x;\tau)
              \+ \bigl(r_0(x) - m \zeta\bigr) \, \wt{\PP}_\zeta(x;\tau) \, .
       \end{equation}
\end{enumerate}
\end{Lemma}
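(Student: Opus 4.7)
Throughout the plan I use the identity $\partial_\tau \PP = \sum_h \PP_{\wh{h}}$ from \eqref{E-Pp Ph}. For part (1), I first treat the generic case where $\PP(x\pv\cdot)$ has $m$ simple roots $\tau_1 < \cdots < \tau_m$, so that $\PP_{\wh{1}}, \dotsc, \PP_{\wh{m}}$ form a basis of the space of polynomials of degree $\le m-1$, and the proper decomposition is unique, given by Newton-Lagrange: $\ell_h = \RR(\tau_h)/\PP_{\wh{h}}(\tau_h)$. A direct sign count yields $\mathrm{sign}(\PP_{\wh{h}}(\tau_h)) = (-1)^{m-h}$. If $\PP$ and $\RR$ are interlaced with leading coefficient $r_0 > 0$, then $\RR(\tau_h) = r_0 \prod_j(\tau_h - \lambda_j)$ has exactly $h-1$ positive factors and $m-h$ non-positive factors, hence the same sign, so $\ell_h \ge 0$. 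Conversely, if $\ell_h > 0$ for all $h$, then $\RR(\tau_h) = \ell_h \PP_{\wh{h}}(\tau_h)$ alternates in sign as $h$ ranges over $1, \dotsc, m$, and the intermediate value theorem produces a root of $\RR$ in each $(\tau_h, \tau_{h+1})$; these exhaust the $m-1$ roots of $\RR$ and give the interlacing.

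For part (2), starting from a proper decomposition $\RR = \sum_h \ell_h \PP_{\wh{h}}$ with $\ell_h \in L^\infty(\R)$, I pick a constant $\zeta$ with $\zeta > \sup_{x,h} \ell_h(x)$. Thanks to \eqref{E-Pp Ph},
\[
\RR - \zeta\, \partial_\tau \PP = \sum_{h=1}^m (\ell_h - \zeta)\, \PP_{\wh{h}} \, ,
\]
whose leading coefficient is $\sum_h(\ell_h - \zeta) = r_0 - m\zeta < 0$. Setting
\[
\wt{\PP}_\zeta := \frac{\RR - \zeta\, \partial_\tau \PP}{r_0 - m\zeta} = \sum_{h=1}^m \frac{\ell_h - \zeta}{r_0 - m\zeta}\, \PP_{\wh{h}}
\]
gives a monic polynomial whose proper decomposition has positive coefficients (ratio of two negative quantities) summing to $1$; by part (1), $\wt{\PP}_\zeta$ is hyperbolic and interlaced with $\PP$. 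Rearranging yields identity \eqref{E-RPt}, and since $\zeta\,\partial_\tau \PP$ is itself interlaced with $\PP$ (by Rolle's theorem, or by applying part (1) to its proper decomposition with all coefficients equal to the positive constant $\zeta$), this realises $\RR$ as the required sum of two hyperbolic polynomials of degree $m-1$ both interlaced with $\PP$.

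The main obstacle is the multiple-root case in part (1): when $\PP$ has coinciding roots, the $\PP_{\wh{h}}$ cease to be linearly independent, the proper decomposition is no longer unique, and the Newton-Lagrange formula is unavailable. I expect to resolve this by a perturbation argument, approximating $\PP$ by polynomials with simple roots (via Nuij's construction invoked in the sketch of Theorem \ref{T-1}) and using the uniform bound $|\ell_h(x)| \le \sqrt{C_0}$ extracted in the proof of Lemma \ref{E-equi} to pass to the limit and recover a non-negative proper decomposition of $\RR$. Part (2) is insensitive to this issue, since it needs only the existence of an $L^\infty$ proper decomposition of $\RR$, which is part of its hypothesis.
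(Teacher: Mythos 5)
Your part (2) is exactly the paper's argument: the paper also defines $\wt{\PP}_\zeta:=\frac{1}{m\zeta-r_0(x)}\bigl\{\zeta\,\partial_\tau\PP-\RR\bigr\}$, observes via \eqref{E-Pp Ph} and \eqref{E-dec 1} that $\zeta\,\partial_\tau\PP-\RR=\sum_{h}\bigl(\zeta-\ell_h(x)\bigr)\PP_{\wh{h}}$ has positive coefficients once $\zeta$ is chosen larger than $\sup_{x,h}\ell_h(x)$ (which is possible precisely because the $\ell_h$ are in $L^\infty$), and then invokes part (1) to conclude that $\wt{\PP}_\zeta$ is monic, hyperbolic and interlaced with $\PP$, while $\partial_\tau\PP$ interlaces $\PP$ by Rolle (Remark \ref{R-der P}). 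So for (2) there is nothing to add.

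The divergence is in part (1), which the paper does not prove: it is quoted as Lemma~1.20 of Fisk's monograph. Your Newton--Lagrange sign count is a correct, self-contained proof when $\PP(x;\cdot)$ has simple roots (note that with the non-strict interlacing \eqref{E-inter2} you only get $\ell_h\ge 0$, and you should also record the bound $0\le\ell_h\le r_0$ that the interlacing gives, since properness requires $\ell_h\in L^\infty$). The multiple-root case, however, is where the actual content lies, and your sketch of it has a genuine gap in the direction ``interlaced $\Rightarrow$ positive proper decomposition'': perturbing $\PP$ alone by Nuij's map does not in general preserve the interlacing with the fixed polynomial $\RR$ (the roots of $\PP_\e$ spread out around a multiple root while those of $\RR$ stay put), so the Newton--Lagrange coefficients of $\RR$ relative to $\PP_\e$ need not be non-negative, nor uniformly bounded in $\e$; one has to approximate the \emph{pair} $(\PP,\RR)$, or argue directly using the fact that interlacing forces $\RR$ to vanish to order $\nu-1$ at a root of $\PP$ of multiplicity $\nu$. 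Moreover, the bound $|\ell_k|\le\sqrt{C_0}$ from Lemma \ref{E-equi} cannot be borrowed for this purpose: it presupposes that an $L^\infty$ proper decomposition (equivalently \eqref{E-Ru}) is already available, which is exactly what you are trying to produce, and in any case the construction in Lemma \ref{E-equi} yields bounded coefficients with no sign information. (The opposite direction, positive combination $\Rightarrow$ interlacing, does pass to the limit harmlessly, since there you may keep the same $\ell_h$ and perturb the reduced polynomials.) Since the paper settles (1) by citation, the simplest repair is to do the same; otherwise the degenerate case needs an actual argument rather than the stated expectation.
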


\begin{proof}
The first part of the Lemma  is already known:
see Lem\-ma~1.20 in~\cite{Fisk}.
\sm

\noi We prove the second part. Firtsly recall that the polynomial $\,\PP_\tau$ is interlaced with $\,\PP$ (Remark \ref{R-der P}).
On the other side, according to~\eqref{E-dec 1}, and~\eqref{E-Pp Ph},
\[
\zeta\, \partial_\tau\PP(x;\tau) \,-\, \RR(x;\tau)
  \ug  \sum_{h=1}^m \, \bigl(\zeta - \ell_h(x)\bigr) \, \PP_{\wh{h}}(x;\tau) .
\]
Thus, choosing $\,\zeta\,$ large enough,
the coefficients $\,\zeta - \ell_h(x)$ are all positive, 
and hence, by the part (1), the~polynomial
\[
\wt{\PP}_\zeta(x;\tau)
  :\ug  \frac{1}{m \zeta - r_0(x)} \, \Bigl\{\zeta\, \partial_\tau\PP(x;\tau) - \RR(x;\tau)\Bigr\}
\]
is  monic, hyperbolic and
 interlaced with~$\PP(x;\tau)$.
\end{proof}
\gb

Here we give some condition
in order to have a~proper decomposition of second order.

\begin{Lemma} \label{L-inter2}
Let $\,\SS(x\pv\tau)$ be a polynomial of~degree~$\,\le m-2$,
and assume that one of the following conditions is fulfilled\textup:
\begin{enumerate}
\item  $\SS$ has a~proper decomposition w.r.t.~$\,\PP$;
\sm
\item  there exists a polynomial $\,\RR$ of degree $\,\le m-1$
       interlaced with $\,\PP$ such that 
       $\SS$ has a~proper decomposition w.r.t.~$\,\RR$.
\end{enumerate}

\noi Then $\,\SS(x\pv\tau)$ has a~proper decomposition of second order w.r.t.~$\,\PP$.
\end{Lemma}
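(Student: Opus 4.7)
The plan is to treat the two hypotheses separately; Case (2) is the more delicate one. For Case (1), I would start from the given decomposition $\SS = \sum_{h=1}^m \ell_h\,\PP_{\wh{h}}$, with $\ell_h \in L^\infty$, and exploit the degree constraint: since each $\PP_{\wh{h}}$ is monic of degree $m-1$ while $\deg \SS \mn m-2$, matching the coefficient of $\tau^{m-1}$ forces $\sum_h \ell_h(x) \equiv 0$. This relation eliminates $\ell_1$ and rewrites each difference $\PP_{\wh{h}} \meno \PP_{\wh{1}}$ as $(\tau_h \meno \tau_1)\,\PP_{\wh{1},\wh{h}}$, an identity that follows from factoring $\PP_{\wh{1},\wh{h}}$ out of both $\PP_{\wh{1}}$ and $\PP_{\wh{h}}$. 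This directly yields the required second-order decomposition $\SS = \sum_{h=2}^m \ell_h\,(\tau_h \meno \tau_1)\,\PP_{\wh{1},\wh{h}}$, whose coefficients are bounded by $2\,\taumax\,\|\ell_h\|_\infty$ and therefore lie in $L^\infty$.

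For Case (2), I have $\SS = \sum_{j=1}^{m-1} m_j\,\RR_{\wh{j}}$, and my plan is to prove that each individual $\RR_{\wh{j}}$ already admits a second-order decomposition w.r.t.\ $\PP$; the result then follows by linearity and a regrouping over unordered index pairs. The pivotal step is to check that $\RR_{\wh{j}}$ is interlaced with $\PP_{\wh{j}}$ (in the natural analogue of \eqref{E-inter2}, with degrees $m-1$ and $m-2$ respectively): after reordering both root sets---$\PP_{\wh{j}}$'s are $\{\tau_\ell : \ell \ne j\}$ and $\RR_{\wh{j}}$'s are $\{\lambda_i : i \ne j\}$---the required inequalities reduce to those between $\RR$ and $\PP$, with two boundary positions ($k=j-1$ and $k=j$) handled via the chains $\lambda_{j-1} \mn \tau_j \mn \tau_{j+1}$ and $\tau_{j+1} \mn \lambda_{j+1} \mn \tau_{j+2}$. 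Once this interlacing is in hand, Lemma~\ref{L-Fisk}(1) applied with $(\PP_{\wh{j}},\RR_{\wh{j}})$ in the roles of $(\PP,\RR)$ gives a proper decomposition $\RR_{\wh{j}} = \sum_{h\ne j} \nu_h^j\,\PP_{\wh{j},\wh{h}}$ with $\nu_h^j \mg 0$ in $L^\infty$, using that the reduced polynomials of $\PP_{\wh{j}}$ are precisely the bi-reduced polynomials $\PP_{\wh{j},\wh{h}}$ of $\PP$. Substituting into $\SS$ and grouping by unordered pairs produces the target second-order decomposition.

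The main obstacle is the interlacing claim in Case (2): it is not \emph{a priori} obvious that removing $\tau_j$ from $\PP$'s roots and $\lambda_j$ from $\RR$'s roots preserves the interlacing property, and the verification splits into the cases where the running index lies strictly below, at the boundary of, or above $j$. Once that is done, the $L^\infty$-bound on the coefficients $\nu_h^j$ comes for free from the $L^\infty$-requirement built into the definition of proper decomposition, and hence into the statement of Lemma~\ref{L-Fisk}(1); notably Hypothesis~\ref{H-CO} itself is not invoked here---only the boundedness of $\taumax$, which has been standing throughout.
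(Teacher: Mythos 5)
Your proposal is correct and follows essentially the same route as the paper: in case (1) the vanishing of $\sum_h \ell_h$ plus the identity $\PP_{\wh{h}}-\PP_{\wh{k}}=(\tau_k-\tau_h)\,\PP_{\wh{h},\wh{k}}$, and in case (2) the reduction by linearity to a single $\RR_{\wh{\jmath}}$, its interlacing with $\PP_{\wh{\jmath}}$, and Lemma~\ref{L-Fisk}-(1) applied to the pair $(\PP_{\wh{\jmath}},\RR_{\wh{\jmath}})$, whose reduced polynomials are the bi-reduced polynomials of $\PP$. The only difference is that you spell out the index-by-index verification of the interlacing of $\RR_{\wh{\jmath}}$ with $\PP_{\wh{\jmath}}$, which the paper asserts without detail.
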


\begin{proof}
(1) \
Since $\,\SS(x \pv\tau)$ is of~degree~$\,\le m-2$,
by considering the terms of~degree $\,m-1$ in \eqref{E-dec 1}, we see that
\[
\sum_{h=1}^m \, \ell_h(x)  \ug 0 ,
\]
thus:
\begin{align*}
\SS(x\pv\tau)
&  \ug  \sum_{h=1}^m \, \ell_h(x) \, \PP_{\wh{h}}(x\pv\tau)
     \,-\, \Bigl\{\sum_{h=1}^m \, \ell_h(x)\Bigr\} \, \PP_{\wh{1}}(x\pv\tau)  \\
&  \ug  \sum_{h=2}^m \, \ell_h(x) \, \Bigl\{\PP_{\wh{h}}(x\pv\tau) - \PP_{\wh{1}}(x\pv\tau)\Bigr\} .
\end{align*}

\noi To derive the wished conclusion, we have only to observe that, for any $h\ne k$, it holds
\[
\PP_{\wh{k}}(x;\tau)-\PP_{\wh{h}}(x \pv  \tau)
  \ug  (\tau-\tau_k) \, \PP_{\wh{h},\wh{k}}(x\pv\tau) - (\tau-\tau_h) \, \PP_{\wh{h},\wh{k}}(x \pv \tau)
  \ug  (\tau_h-\tau_k) \, \PP_{\wh{h},\wh{k}}(x\pv\tau) .
\]

\noi (2) \,
By linearity, we can assume,
with no loss of~generality,
that $\,\SS(x;\tau) = \RR_{\wh{\jmath}}(x;\tau)$, for some $j$.
As $\,\RR_{\wh{\jmath}}$ is interlaced with $\,\PP_{\wh{\jmath}}$,
thanks to Lemma~\ref{L-Fisk}-(1),
it has a proper decomposition w.r.t.~$\PP_{\wh{\jmath}}$,
hence it has a proper decomposition of second order w.r.t.~$\PP$.
\end{proof}

The converse of~Lemma~\ref{L-inter2} is false in general.

\begin{Example}
Let
\begin{align*}
\PP(x;\tau)
&  \ug  (\tau^2-x^2)\,(\tau^2-1)  \\
\SS(x\pv\tau)
&  \ug  (\tau^2-1) \, .
\end{align*}
$\,\SS(x\pv \tau)$ has a proper decomposition of~second order w.r.t.~$\,\PP(x\pv\tau)$,
but is not properly decomposable w.r.t.~$\,\PP(x\pv\tau)$.
Indeed, there's a unique way in which we can write $\SS(x\pv\tau)$
as a linear combination of the reduced polynomials of~$\,\PP(x\pv\tau)$
(cf.~Remark~\ref{R-1}),i.e., 
\[
\tau^2-1
  \ug  \frac{1}{2x} \, (\tau+x)\,(\tau^2-1)
     \,-\,  \frac{1}{2x} \, (\tau-x)\,(\tau^2-1),
\]
and the coefficients are not bouded.
On the other side,
if
\[
\RR(x\pv\tau)
  \ug  \tau\,(\tau^2-4\,x^2),
\]
$\,\PP$ and $\,\RR$ are interlaced polynomials,
but $\,\SS$ is not properly decomposable w.r.t.~$\,\RR$,
since any linear combination of~the reduced polynomials of~$\,\RR$
vanishes for $\,x=0$ and $\,\tau=0$.
\end{Example}
\sm
\gb

This example shows that,
in order to~reverse Lemma~\ref{L-inter2}-(2),
if~$\,\tau_j(x)<\lambda_j(x)<\tau_{j+1}(x)$
and $\,\lambda_j(x)$ approaches $\,\tau_j(x)$,
then $\,\tau_{j+1}(x)$ should approach them too.
Roughly speaking, $\,\lambda_j(x)$ has to~remain in some ``central part''
of the interval~$\,\bigl[\tau_j(x),\tau_{j+1}(x)\bigr]$.
\sm

Motivated by Peyser's inequality~\eqref{E-Peyser},
we~now consider the estimate
\begin{equation} \label{E-Peysergen}
\tau_j(x) \+ \eta\,\bigl(\tau_{j+1}(x)-\tau_j(x)\bigr)
  \mn  \lambda_j(x)
  \mn  \tau_{j+1}(x) \,-\, \eta\,\bigl(\tau_{j+1}(x)-\tau_j(x)\bigr) ,
\end{equation}
for some~$\,0<\eta<1/2$, and all $\,j=1,\dotsc,m-1$.
\sm

Note that from the interlacing property~\eqref{E-inter2}
it follows that the ratios
\begin{equation} \label{E-ratio1}
\frac{\tau_{j+1}(x)-\lambda_j(x)}{\lambda_{j+1}(x)-\lambda_j(x)}
\qquad\qquad
\frac{\lambda_{j+1}(x)-\tau_{j+1}(x)}{\lambda_{j+1}(x)-\lambda_j(x)}
\end{equation}
are bounded,
whereas from~\eqref{E-Peysergen}
it~follows
\[
\eta\,\bigl(\tau_{j+2}(x)-\tau_j(x)\bigr)
  \mn  \lambda_{j+1}(x)-\lambda_j(x)
  \mn  (1-\eta) \, \bigl(\tau_{j+2}(x)-\tau_j(x)\bigr) \, ,
\]
hence also the ratios
\begin{alignat}{2}
&  \frac{\lambda_j(x)-\tau_j(x)}{\lambda_{j+1}(x)-\lambda_j(x)}  & \qquad\qquad
&  \frac{\lambda_{j+1}(x)-\tau_j(x)}{\lambda_{j+1}(x)-\lambda_j(x)} \label{E-ratio2}  \\
&  \frac{\tau_{j+2}(x)-\lambda_j(x)}{\lambda_{j+1}(x)-\lambda_j(x)}  &
&  \frac{\tau_{j+2}(x)-\lambda_{j+1}(x)}{\lambda_{j+1}(x)-\lambda_j(x)} \label{E-ratio3}
\end{alignat}
are bounded.

\begin{Lemma}
Assume that~$\,\,\PP$ and $\RR$ are interlaced
and~their roots verify~\eqref{E-Peysergen}.
Then any polynomial $\,\SS$ having
a~proper decomposition of~second order w.r.t.~$\,\PP$
has a~proper decomposition w.r.t.~$\,\RR$.
\end{Lemma}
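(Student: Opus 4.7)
My plan is to reduce the claim, by linearity, to showing that each bi-reduced polynomial $\PP_{\wh h,\wh k}(x;\tau)$ admits a proper decomposition with respect to $\RR(x;\tau)$ with $L^\infty$ coefficients whose bound depends only on $m$ and $\eta$. Once this is done, substituting the assumed second-order decomposition $\SS=\sum_{h<k}\ell_{h,k}\PP_{\wh h,\wh k}$ immediately yields a first-order decomposition of $\SS$ with respect to $\RR$.

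I will first assume that the roots $\lambda_1(x)<\dots<\lambda_{m-1}(x)$ are pairwise distinct for each $x$; the degenerate case then follows by a continuity/approximation argument, since all the estimates to be obtained are uniform. Under this assumption the $m-1$ polynomials $\{\RR_{\wh j}(x;\cdot)\}$ form a basis of the space of polynomials in $\tau$ of degree $\le m-2$, and by the Newton--Lagrange formula
\[
\PP_{\wh h,\wh k}(x;\tau)
  \ug \sum_{j=1}^{m-1}\mu_j^{(h,k)}(x)\,\RR_{\wh j}(x;\tau),
\qquad
\mu_j^{(h,k)}(x)
  \ug \frac{\prod_{\ell\ne h,k}\bigl(\lambda_j(x)-\tau_\ell(x)\bigr)}
           {\prod_{i\ne j}\bigl(\lambda_j(x)-\lambda_i(x)\bigr)}.
\]
The task reduces to bounding $|\mu_j^{(h,k)}(x)|$ uniformly in $x$.

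To this end I set up a bijection between the $m-2$ ``$\tau$-indices'' $\{1,\dots,m\}\setminus\{h,k\}$ and the $m-2$ ``$\lambda$-indices'' $\{1,\dots,m-1\}\setminus\{j\}$ by listing each set in increasing order and pairing position by position: writing $T_s$, $L_s$ for the $s$-th elements of the two lists, one checks $T_s-L_s\in\{-1,0,1,2\}$. For each of the $m-2$ paired factors I will show
\[
\left|\frac{\lambda_j(x)-\tau_{T_s}(x)}{\lambda_j(x)-\lambda_{L_s}(x)}\right|
  \mn \frac{1}{\eta}
\]
by a short case analysis on $(T_s-L_s,\,\mathrm{sign}(j-L_s))$. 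The numerator is always controlled by a nearby $\tau$-gap $\tau_{i+1}-\tau_i$, while the denominator admits the lower bound $\lambda_{i+1}-\lambda_i\ge\eta(\tau_{i+2}-\tau_i)$ already used in the proof of Lemma~\ref{L-pq}; both ingredients are consequences of the interlacing and of the Peyser-type estimate~\eqref{E-Peysergen}. Multiplying the $m-2$ paired ratios yields $|\mu_j^{(h,k)}(x)|\le\eta^{-(m-2)}$.

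Inserting this into the assumed second-order decomposition of $\SS$ gives
\[
\SS(x;\tau)
  \ug \sum_{j=1}^{m-1}\Bigg(\sum_{h<k}\ell_{h,k}(x)\,\mu_j^{(h,k)}(x)\Bigg)\RR_{\wh j}(x;\tau),
\]
and the parenthesized coefficients belong to $L^\infty$, proving that $\SS$ has a proper decomposition with respect to $\RR$. I expect the finite but careful case analysis for the pair ratios to be the main technical hurdle: it is precisely where the strict spacing built into~\eqref{E-Peysergen} is used, which is consistent with the counterexample immediately preceding the statement, showing that without such spacing the conclusion fails.
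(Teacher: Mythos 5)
Your proposal is correct and follows essentially the same route as the paper: expand the bi-reduced polynomials of $\,\PP$ in the reduced polynomials of $\,\RR$ via the Newton--Lagrange formula and bound the resulting coefficients using the interlacing together with~\eqref{E-Peysergen}, i.e.\ the boundedness of the ratios \eqref{E-ratio1}--\eqref{E-ratio3}. The paper writes this out only for $m=3$ and leaves the general case as ``additional technical effort''; your position-by-position pairing of numerator and denominator factors is exactly that effort, and the case analysis does close (each paired ratio is bounded by $1/\eta$, using both inequalities in~\eqref{E-Peysergen}), with the coincident-$\lambda_j$ case handled, as you say, by uniformity of the bounds.
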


\begin{proof}
We give the proof in the case $m=3$.
The proof in the general case requires only additional technical effort.
\sm

\noi If $\,m=3$ we have
\begin{equation} \label{E-inter4}
\tau_1(x)
  \mn \lambda_1(x)
  \mn \tau_2(x)
  \mn \lambda_2(x)
  \mn \tau_3(x) \, ,
\end{equation}
and we have to prove that
\[
\tau-\tau_1(x) \, ,\, \quad
\tau-\tau_2(x) \, ,\, \quad
\tau-\tau_3(x)   
\]
are linear combination with bounded coefficients
of
\[
\tau-\lambda_1(x) \, ,\, \quad
\tau-\lambda_2(x) \, .
\]
Now, by Newton Lagrange formula, we have
\begin{align*}
\tau-\tau_1(x)
&  \ug  \frac{\lambda_2(x)-\tau_1(x)}{\lambda_2(x)-\lambda_1(x)} \, \bigl(\tau-\lambda_1(x)\bigr)
      \+  \frac{\lambda_1(x)-\tau_1(x)}{\lambda_1(x)-\lambda_2(x)} \, \bigl(\tau-\lambda_2(x)\bigr)  \\
\tau-\tau_2(x)
&  \ug  \frac{\lambda_2(x)-\tau_2(x)}{\lambda_2(x)-\lambda_1(x)} \, \bigl(\tau-\lambda_1(x)\bigr)
      \+  \frac{\lambda_1(x)-\tau_2(x)}{\lambda_1(x)-\lambda_2(x)} \, \bigl(\tau-\lambda_2(x)\bigr)  \\
\tau-\tau_3(x)
&  \ug  \frac{\lambda_2(x)-\tau_3(x)}{\lambda_2(x)-\lambda_1(x)} \, \bigl(\tau-\lambda_1(x)\bigr)
      \+  \frac{\lambda_1(x)-\tau_3(x)}{\lambda_1(x)-\lambda_2(x)} \, \bigl(\tau-\lambda_2(x)\bigr).
\end{align*}

Thanks to~\eqref{E-ratio1}, \eqref{E-ratio2} and~\eqref{E-ratio3}
all the coefficients are bounded.
\end{proof}

\sm

Finally, from Peyser's inequality~\cite{Peyser-1967} (cf.~\eqref{E-Peyser})
we derive the~following result:

\begin{Lemma} \label{L-interrev}
Any polynomial $\SS(x;\tau)$ having
a~proper decomposition of~second order w.r.t.~$\PP(x;\tau)$
has a~proper decomposition w.r.t.~$\partial_\tau \PP(x;\tau)$.
\end{Lemma}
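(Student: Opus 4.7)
My plan is to obtain Lemma~\ref{L-interrev} as an immediate consequence of the preceding Lemma, applied with the interlaced polynomial taken to be the monic rescaling $\RR := \frac{1}{m}\,\partial_\tau\PP$ of the derivative of~$\PP$. This reduces the proof to verifying that $\PP(x\pv\tau)$ and $\partial_\tau\PP(x\pv\tau)$ are interlaced and that their roots satisfy the Peyser-type two-sided bound~\eqref{E-Peysergen} with some uniform constant $\eta>0$.

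The interlacing is Rolle's theorem, already recorded in Remark~\ref{R-der P}. The sharpened inequality~\eqref{E-Peysergen} is precisely what Peyser's refinement~\eqref{E-Peyser} supplies: writing $\tau_1(x)\mn\dotsb\mn\tau_m(x)$ for the roots of~$\PP$ and $\lambda_1(x)\mn\dotsb\mn\lambda_{m-1}(x)$ for those of~$\partial_\tau\PP$, the estimate~\eqref{E-Peyser} reads
\[
\tau_j \+ \frac{\tau_{j+1}-\tau_j}{m-j+1}
  \mn  \lambda_j
  \mn  \tau_{j+1} \meno \frac{\tau_{j+1}-\tau_j}{j+1},
\qquad j=1,\dotsc,m-1.
\]
Since both $j+1$ and $m-j+1$ lie in $\{2,\dotsc,m\}$ as $j$ ranges over $\{1,\dotsc,m-1\}$, the choice $\eta := 1/m$ makes~\eqref{E-Peysergen} hold uniformly in~$j$ and in~$x$.

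Once these two hypotheses are secured, the preceding Lemma applies and yields a proper decomposition of~$\SS$ with respect to~$\frac{1}{m}\,\partial_\tau\PP$, which is manifestly the same as a proper decomposition with respect to~$\partial_\tau\PP$ (the reduced polynomials of these two polynomials differ only by the constant factor~$m$, which is absorbed into the $L^\infty$ coefficients~$\ell_k$). I do not anticipate a genuine obstacle: the whole argument is bookkeeping on top of Rolle's theorem, Peyser's inequality, and the preceding Lemma. The only mild subtlety is the extremal case $m=2$, in which $\eta = 1/m = 1/2$ sits on the boundary of the range~$(0,1/2)$ considered in~\eqref{E-Peysergen}; this case is degenerate anyway, since every $\SS$ admitting a second-order proper decomposition w.r.t.~$\PP$ has $\tau$-degree $\le m-2=0$, i.e.\ is a bounded function of~$x$ alone, and hence trivially admits a proper decomposition w.r.t.~$\partial_\tau\PP$.
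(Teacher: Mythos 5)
Your proposal is correct and follows exactly the route the paper intends: the paper states Lemma~\ref{L-interrev} as a direct consequence of Peyser's inequality~\eqref{E-Peyser} combined with the preceding lemma, which is precisely your argument (interlacing from Rolle, \eqref{E-Peysergen} with $\eta=1/m$, and the harmless monic rescaling of $\partial_\tau\PP$). Your separate treatment of the degenerate case $m=2$ is a reasonable extra precaution but changes nothing essential.
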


Now we have all the tools necessary to prove  Proposition~\ref{P-de k b}.

\begin{proof}[\textbf{Proof of~Proposition~\ref{P-de k b}.}]
By~\eqref{E-RPt} we derive the identity:
\begin{equation} \label{E-last}
\begin{split}
\partial_x \Bigl\{\RR(x\pv\tau) - r_0(x) \tau^{m-1}\Bigr\}
&    \\ \ug \zeta\, \partial^2_{x\tau}\PP(x\pv\tau)
          & \+ \bigl(r_0(x) - m \zeta\bigr) \, \partial_x \wt{\PP}_\zeta(x\pv\tau)  \+ r_0'(x) \bigl( \wt{\PP}_\zeta(x\pv\tau) - \tau^{m-1}\bigr),
\end{split}
\end{equation}
and we prove that each of the  terms on the left  
has a proper decomposition w.r.t.~$\,\partial_\tau \PP$.
\sm

Since $\, \partial_\tau \PP(x\pv\tau)$ is a hyperbolic polynomial,
using~\eqref{E-Px Ph} with~$\,\partial_\tau \PP$ in place of $\,\PP$,
we~see that $\,\partial^2_{x\tau} \PP$
has a proper decomposition w.r.t.~$\,\partial_\tau \PP$.
\sm

For the second term,
as~$\,\wt{\PP}_\zeta$ is a monic hyperbolic polynomial,
using~\eqref{E-Px Ph} with~$\,\wt{\PP}_\zeta$ in place of~$\,\PP$
we~see that $\,\partial_x \wt{\PP}_\zeta$
has a proper decomposition w.r.t.~$\,\wt{\PP}_\zeta$.
As~the roots of~$\,\wt{\PP}_\zeta$ are interlaced with those of~$\,\PP$,
by Lemma~\ref{L-inter2}-(2),
$\,\partial_x \wt{\PP}_\zeta$ 
has a proper decomposition of second order w.r.t.~$\,\PP$, hence  by Lemma~\ref{L-interrev} it has   a proper decomposition  w.r.t.~$\partial_\tau \PP$. 
\sm

For the third term,
as~$\,\wt{\PP}_\zeta$ is interlaced with~$\,\PP$,
by Lemma~\ref{L-Fisk}-(1), it
 has a proper decomposition w.r.t.~$\PP$.
Moreover, by~Lemma~\ref{L-tau dec},
$\,\tau^{m-1}$ has a proper decomposition w.r.t.~$\,\PP$ too.
Thus $\,\wt{\PP}_\zeta(x\pv\tau)- \tau^{m-1}$
is a polynomial of~order $\le m-2\,$
which has  a proper decomposition w.r.t.~$\,\PP$, 
hence  also a proper decomposition of second order w.r.t.~$\,\PP$
and by Lemma~\ref{L-interrev} a proper decomposition  w.r.t.~$\partial_\tau \PP$. 
\end{proof}

\section {Nuij's approximation}

We have proved an apriori estimate for each space-time derivative of the solutions to  $\, \LL u = f$, where $\,\LL\eqv \PP-\bigl\{\RR_{m-1}+ \cdots +\RR_0\}$ is the operator \eqref{eq:L}.  
\sm

\noi To construct a solution for the corresponding Cauchy Problem, we proceed as in~\cite{ST-JHDE}: we approximate the operator $\,\LL$ by a sequence $\,\{\LL_\e\}$ of strictly hyperbolic operators,  
to get a sequence $\,\{u_\e\}$ of approximating  solutions  converging to some solution of \eqref{E-scalar}. 
\sm

\noi More precisely we approximate the principal symbol of $\,\LL$, i.e., the polynomial $\,\PP(x\pv \tau,\xi)$, by the polynomials
\begin{equation} \label{E-Nuij}
\PP_\e :\ug \sN_e^{m-1} (\PP),
\end{equation}
where $\,\sN_\e$ is the {\it Nuij's map}:
\begin{equation}
 \ \sN_\e : \ P  \ \longmapsto \  P -\e\,\de_\tau P.
\end{equation}

\noi In~\cite{Nuij}, Nuij proved that, if $\,P(\tau)$ is a hyperbolic polynomial with maximum multiplicity $\,r$,
then $P-\e\,\de_\tau P\,$ is a hyperbolic polynomial with maximum multiplicity $\,r-1$. Thus the polynomial $\,\sN_\e^{m-1}P$ is strictly hyperbolic. 
Then we define 
\[
\LL_\e := \PP_\e -\bigl\{\RR_{m-1}+ \cdots +\RR_0 \bigr\}.
\]

\noi Now, let $\,u_\e$ be the solution of
the Cauchy Problem \eqref{E-scalar}-\eqref{E-data}
with $\,\LL_\e$ in place of $\,\LL$.
\sm
 
Before applying our apriori estimate to  the functions $\,u_\e$, we check that the polynomials $\,\LL_\e(t,x\pv\tau,\xi)$ verify the Hypothesis~\ref{H-H},~\ref{H-CO} and~\ref{H-ph}, uniformly w.r.t.~$\e$.

Now $\,\LL_\e(t,x\pv\tau,\xi)$ verify Hypothesis~\ref{H-H} by Nuij's Lemma,
and Hypothesis~\ref{H-CO} by Lemma 1.9 in~\cite{ST-JHDE}.
To check Hypothesis~\ref{H-ph}
we use the following result of S.~Wakabayashi (see~\cite{Wakabayashi-1986})
which precises the Nuij's approximation:

\begin{Lemma} \label{L-waka}
Let $\,P(\tau)$ an hyperbolic polynomial with degree $\,m$, and $\,P_\e 
= \sN_\e (P)$.  Denote by $\{\tau_j\}$ and $\{\tau_{j,\e}\}$ the roots of $\,P$ and $\, P_\e$. Therefore there are two constants, depending only on $\,m$, such that, for all $j,k=1,\dotsc,m$ with $\,j\ne k$, it results\textup:
\begin{enumerate}
\item  $\bigl|\tau_{j,\e}(x)-\tau_j(x)\bigr| \mn C_1\, \e$, 
\item  $\bigl|\tau_{k,\e}(x)-\tau_{j,\e}(x)\bigr| \mg C_2\, \e$.
\end{enumerate}
\end{Lemma}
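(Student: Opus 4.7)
We argue under the reading that $P_\e$ stands for the iterated Nuij transform $\sN_\e^{m-1}(P)$, which by Nuij's lemma is strictly hyperbolic (this is the polynomial actually used in the paper; a single $\sN_\e$ need not give the separation~(2)). The basic tool is the identity
\[
\sN_\e(P)(\tau) \ug -\e\,e^{\tau/\e}\,\partial_\tau\bigl(e^{-\tau/\e}P(\tau)\bigr),
\]
which identifies the roots of $\sN_\e(P)$ with the critical points of the real function $e^{-\tau/\e}P(\tau)$. Combined with Rolle's theorem and the behaviour $e^{-\tau/\e}P(\tau)\to 0$ as $\tau\to+\infty$, this already accounts for the multiplicity book-keeping in Nuij's lemma and underlies both parts of the proof.

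For (1), the displacement produced by a single Nuij step is controlled by Rouch\'e's theorem: on the circle of radius $K\e$ around a root $\tau_j$ of $P$ of local multiplicity $r$, one has $|P|\mg c_0(K\e)^{r}$ while $|\e P'|\mn c_1 K^{r-1}\e^{r}$, so taking $K$ large and independent of~$\e$ gives $|\e P'|<|P|$ and $\sN_\e(P)$ has exactly $r$ zeros inside the disk. Iterating $m-1$ times yields the cumulative bound $|\tau_{j,\e}-\tau_j|\mn C_1\e$. For (2) the idea is a local rescaling. Fix a root $\tau_0$ of $P$ of multiplicity~$r$, write $P(\tau)=(\tau-\tau_0)^{r}Q(\tau)$ with $Q(\tau_0)\ne 0$, and set $\sigma=(\tau-\tau_0)/\e$. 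A direct computation shows that $\sN_\e^{m-1}(P)$ expressed in~$\sigma$ converges, as $\e\to 0$ and uniformly on compact subsets of $\sigma$-space, to a non-zero multiple of $\sN_1^{r-1}(\sigma^{r})$: the remaining $m-r$ Nuij steps act on the non-vanishing factor~$Q$ and contribute only $O(\e)$ perturbations. Since $\sN_1^{r-1}(\sigma^{r})$ is a strictly hyperbolic polynomial in~$\sigma$ of degree~$r\mn m$ whose $r$ distinct real roots $\sigma_1<\dotsb<\sigma_r$ depend only on~$r$, the zeros of $P_\e$ clustering near $\tau_0$ lie at $\tau_0+\e\sigma_j+o(\e)$ and are separated by at least $\tfrac12\min_{i\ne j}|\sigma_i-\sigma_j|\cdot\e$; zeros in different clusters are separated by $\Omega(1)\gg\e$ by~(1). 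The constants depend only on~$m$.

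The main obstacle is uniformity of $C_1,C_2$ when distinct roots of~$P$ themselves coalesce (as they do when we vary~$x$), so that the cluster decomposition degenerates. The clean remedy is a compactness argument on the space of monic hyperbolic polynomials of degree~$m$ modulo translation and dilation: this space is compact, and each Nuij iterate depends continuously on the coefficients, so the inequalities in~(1)--(2) hold uniformly. Equivalently one performs a nested Rouch\'e analysis on annuli tuned to the intrinsic scales of the root configuration, assigning each of the $m-1$ Nuij steps to the cluster it resolves; this is more delicate but avoids the compactness step. Either route gives constants depending only on~$m$, as required.
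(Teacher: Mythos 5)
First, a point of reference: the paper never proves Lemma~\ref{L-waka}; it is quoted from Wakabayashi~\cite{Wakabayashi-1986}, so your attempt is measured against the literature rather than an in-paper argument. Your reading that $P_\e$ must be the iterate $\sN_\e^{m-1}(P)$ is correct (after a single step a root of multiplicity $r\ge 3$ leaves a root of multiplicity $r-1$, so (2) would fail), and it matches how the lemma is used for \eqref{E-Nuij}. Part (1) of your argument is essentially sound, though Rouch\'e is heavier than needed and, as you set it up, its constants depend on $P$ (your lower bound $|P|\ge c_0(K\e)^r$ on the circle requires $K\e$ smaller than the distance to the other roots): away from the roots the equation $P=\e\,\partial_\tau P$ reads $\sum_i \e/(\tau-\tau_i)=1$, and a short monotonicity and counting argument shows that one Nuij step moves the ordered roots to the right by an amount between $0$ and $m\e$, uniformly in $P$; iterating gives (1) with $C_1=m(m-1)$. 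In the rescaling step for (2) there is also a slip: under $\sigma=(\tau-\tau_0)/\e$ each of the $m-1$ operators $1-\e\partial_\tau$ becomes exactly $1-\partial_\sigma$, so the local limit is a nonzero multiple of $(1-\partial_\sigma)^{m-1}\sigma^{r}$, not of $\sN_1^{r-1}(\sigma^{r})$ with ``the remaining $m-r$ steps acting on $Q$''; this is harmless for the conclusion, since that limit is still strictly hyperbolic of degree $r$ with roots depending only on $m$ and $r$, but the bookkeeping as stated is wrong.

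The genuine gap is the one you flag yourself and then dismiss too quickly: uniformity of $C_2$ (and of your $C_1$) over all hyperbolic $P$ of degree $m$. That uniformity is the entire content of the lemma as the paper needs it, since $P=\PP(x;\cdot)$ varies with $x$ and its roots coalesce at rates that may be comparable to $\e$. Your cluster/rescaling analysis is valid for a fixed $P$ with $\e$ small compared to the minimal gap between distinct roots, so its constants depend on the root configuration. The proposed compactness remedy does not repair this: the relevant parameter is the pair $(P,\e)$ modulo translation and dilation, and that space is not compact --- normalizing the roots to lie in $[-1,1]$ leaves $\e\in(0,\infty)$ free, and the regimes where some gaps of $P$ are comparable to or smaller than $\e$ (clusters within clusters, at several scales) are exactly where the inequality must be proved; continuity of the Nuij iterates on a compact family of polynomials only yields constants for each fixed configuration, with no control as the configuration degenerates simultaneously with $\e$. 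Concretely, for $P=\tau^2(\tau-\delta)$ with $\delta\ll\e$ one step produces two roots at distance about $2\delta/3\ll\e$, and only the second step separates them to scale $\e$; any proof must track such interactions between $\e$ and all mutual gaps. The alternative ``nested Rouch\'e analysis on annuli tuned to the intrinsic scales'' is named but not carried out, and that multi-scale induction is precisely what Wakabayashi's proof supplies. As it stands, your argument establishes the lemma for each fixed polynomial with configuration-dependent constants, but not the uniform statement with $C_1,C_2$ depending only on $m$.
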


Then we prove:
\begin{Lemma}
Let
\[
\PP_\e(x\pv\tau,\xi)
  \ug  \prod_{j=1}^m\, \bigl(\tau-\tau_{j,\e}(x)\,\xi\bigr)
\]
be a sequence of strictly hyperbolic polynomial satisfying the properties 	\textup{(1)} and \textup{(2)} of Lemma \ref{L-waka}, 
for some constant $C_1$ and $C_2$ independent of~$\e$ and $x$.
\sm

Therefore, if $\, \RR(t,x\pv\tau,\xi)$ has a proper decomposition w.r.t.~$\PP$,
it has also  a proper decomposition w.r.t.~$\PP_\e$,
with coefficients bounded uniformly w.r.t.~$\e$.
\end{Lemma}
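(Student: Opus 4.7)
The strategy is to exploit the strict hyperbolicity of $\,\PP_\e$, granted by property~(2), to write the decomposition coefficients explicitly via Newton--Lagrange and then bound them uniformly using both~(1) and~(2). Since the roots $\,\tau_{k,\e}(x)$ are pairwise distinct for each fixed $\,\e>0$, Remark~\ref{R-1} applies to $\,\PP_\e(x\pv\tau)$ and yields a unique decomposition
\[
\RR(t,x\pv\tau)
 \ug \sum_{k=1}^m \ell_{k,\e}(t,x)\,\PP_{\e,\wh{k}}(x\pv\tau),
\qquad
\ell_{k,\e}(t,x)
 \ug \frac{\RR\bigl(t,x\pv\tau_{k,\e}(x)\bigr)}
          {\prod_{j\ne k}\bigl(\tau_{k,\e}(x)-\tau_{j,\e}(x)\bigr)}\,.
\]
The task is to bound $\,\|\ell_{k,\e}\|_{L^\infty}\,$ independently of $\,\e$.

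I would then substitute the given decomposition $\,\RR = \sum_h \ell_h\,\PP_{\wh{h}}$ into the numerator to obtain
\[
\ell_{k,\e}(t,x)
 \ug \sum_{h=1}^m \ell_h(t,x)\,R_{h,k}^{\,\e}(x),
\qquad
R_{h,k}^{\,\e}(x)
 \eqv \frac{\prod_{j\ne h}\bigl(\tau_{k,\e}(x)-\tau_j(x)\bigr)}
           {\prod_{j\ne k}\bigl(\tau_{k,\e}(x)-\tau_{j,\e}(x)\bigr)}\,.
\]
For every $\,j\notin\{h,k\}$ I pair the numerator factor $\,(\tau_{k,\e}-\tau_j)\,$ with the denominator factor $\,(\tau_{k,\e}-\tau_{j,\e})$; writing $\,\tau_{k,\e}-\tau_j = (\tau_{k,\e}-\tau_{j,\e}) + (\tau_{j,\e}-\tau_j)\,$ and invoking (1) and~(2) of Lemma~\ref{L-waka}, each such paired ratio is bounded by $\,1+C_1/C_2$.

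If $\,h=k$, all factors are paired and $\,|R_{k,k}^{\,\e}(x)| \le (1+C_1/C_2)^{m-1}$. If $\,h\ne k$, a single unpaired numerator factor $\,(\tau_{k,\e}-\tau_k)\,$ remains, bounded above by $\,C_1\,\e\,$ via~(1), together with a single unpaired denominator factor $\,(\tau_{k,\e}-\tau_{h,\e})$, bounded below by $\,C_2\,\e\,$ via~(2); their ratio is thus bounded by $\,C_1/C_2$. In either case $\,|R_{h,k}^{\,\e}(x)| \mn C(m,C_1,C_2)$, and summing over~$h$ yields
\[
\bigl|\ell_{k,\e}(t,x)\bigr|
 \mn  C(m,C_1,C_2)\,\sum_{h=1}^m \|\ell_h\|_{L^\infty},
\]
uniformly in $\,\e\,$ and~$x$.

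The main obstacle is precisely the case $\,h\ne k$: both the unpaired numerator and the unpaired denominator are of size $\,O(\e)$, so the uniform bound is genuinely nontrivial and crucially requires the matching upper bound in~(1) to compensate the lower bound in~(2). Without this matching, the naive Newton--Lagrange estimate of a coefficient would blow up like $\,\e^{-(m-1)}\,$ as the roots of $\,\PP_\e\,$ collapse back to multiple roots of $\,\PP$.
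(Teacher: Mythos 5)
Your proposal is correct and follows essentially the same route as the paper: your quantity $R_{h,k}^{\,\e}$ is exactly the Lagrange interpolation coefficient $\PP_{\wh{h}}(\tau_{k,\e})/\PP_{\wh{k},\e}(\tau_{k,\e})$ that the paper estimates after reducing by linearity to the reduced polynomials of $\PP$, and your pairing argument using properties (1) and (2) reproduces the paper's bounds $(1+C_1/C_2)^{m-1}$ for the diagonal case and $(C_1/C_2)(1+C_1/C_2)^{m-2}$ for $h\ne k$. There is no gap.
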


\begin{proof}
We know that
\[
\RR(t,x\pv\tau,\xi) \ug  \sum_{k=1}^m \ell_k(x)\PP_{\wh{k}}(\tau)
\]
where the $\, \PP_{\wh{k}}$ are the reduced polynomial of $\,\PP$.
Hence, by linearity, it is sufficient to prove that each  $\, \PP_{\wh{k}}$ has  a proper decomposition w.r.t.~$\PP_\e$.
With no loss of generality we can assume that~$k=1$.
By the Lagrange interpolation formula we have
\[
\PP_{\wh{1}}(\tau)
  \ug  \sum_{k=1}^m \frac{\PP_{\wh{1}}(\tau_{k,\e})}{\PP_{\wh{k},\e}(\tau_{k,\e})} \, \PP_{\wh{k},\e}(\tau) .
\]

If $k=1$, we have
\[
\biggl|\frac{\PP_{\wh{1}}(\tau_{1,\e})}{\PP_{\wh{1},\e}(\tau_{1,\e})}\biggr|
  \ug  \prod_{h=2}^m \Bigl|\frac{\tau_{1,\e}-\tau_h}{\tau_{1,\e}-\tau_{h,\e}}\Bigr|
  \ug  \prod_{h=2}^m \Bigl| 1 + \frac{\tau_{h,\e}-\tau_h}{\tau_{1,\e}-\tau_{h,\e}} \Bigr|
  \mn  \Bigl(1 + \frac{C_1}{C_2}\Bigr)^{m-1} .
\]

If $k\ne1$,  we have
\[
\biggl|\frac{\PP_{\wh{1}}(\tau_{k,\e})}{\PP_{\wh{1},\e}(\tau_{k,\e})}\biggr|
  \ug  \Bigl|\frac{\tau_{k,\e}-\tau_k}{\tau_{k,\e}-\tau_{1,\e}}\Bigr|
     \prod_{\substack{h=2,\dotsc,m\\h\ne k}} \Bigl|\frac{\tau_{k,\e}-\tau_h}{\tau_{k,\e}-\tau_{h,\e}}\Bigr|
  \mn \,  \frac{C_1}{C_2} \, \Bigl(1 + \frac{C_1}{C_2}\Bigr)^{m-2} .
\]
Thus we get the development
\[
\RR(t,x\pv\tau,\xi) \ug  \sum_{k=1}^m \ell_{k,\e}(x)\PP_{\wh{k},\e}(\tau),
\]
with some $ \,\bigl|\ell_{k,\e}(x)\bigr| \mn C$.
\end{proof}


\begin{thebibliography}{KWN}

\bibitem[BB]{BernardiBove-2006}
E. Bernardi and A. Bove,
     \emph{On the~Cauchy problem for some hyperbolic operators
        with double characteristics},
     Phase space analysis of~partial differential equations,
     Progr. Nonlin. Differential Equations Appl. \textbf{69},
     Birch\"{a}user Boston, 2006, 29--44.

\bibitem[B]{Bronstein-1979}
M.~D. Bron\v{s}te\u{\i}n,
     \emph{Smoothness of~roots of~polynomials depending
        on parameters},
     Siberian Math. J. \textbf{20} (1980) 347--352,
     translation from Sibirsk. Mat. Zh. \textbf{20}
     (1979) 493--501.

\bibitem[CO]{ColombiniOrru1999}
F. Colombini and N. Orr\`{u},
     \emph{Well-posedness in $\mathscr{C}^\infty$
        for some weakly hyperbolic equations},
     J.~Math. Kyoto Univ. \textbf{39} (1999), 399--420.

\bibitem[DAS]{DAnconaSpagnolo1998}
P. D'Ancona and S. Spagnolo,
     \emph{Quasi-symmetrization of~hyperbolic systems
        and propagation of~the analytic regularity},
     Boll. Un. Mat. Ital. (8),
     \textbf{B1} (1998), 169--185.

\bibitem[D]{Dunn}
J. L. Dunn,
   \emph{A sufficient condition for hyperbolicity
      of~partial differential operators
      with constant coefficient principal part},
   Trans. Amer. Math Soc. \textbf{201} (1975), 315--327.

\bibitem[F]{Fisk}
S.~Fisk,
   Polynomials, roots, and interlacing,
   2006.
   Avalaible at~\url{https://arxiv.org/abs/math/0612833}.

\bibitem[G]{Garding}
L. G\aa rding,
   \emph{Linear hyperbolic partial differential equations
      with constant coefficients},
   Acta Math. \textbf{85} (1951), 1--62.

\bibitem[GR]{GarettoRuzhansky-2013}
C.~Garetto and M.~Ruzhansky,
   \emph{Weakly hyperbolic equations with non-analytic coefficients 
         and lower order terms},
   Math. Ann. \textbf{357} (2013), 401--440.

\bibitem[H]{Hormander}
L.~Hormander,
   The analysis of~linear partial differential operators,
   Grundlehren der Mathematischen Wissenschaften,
   Springer-Verlag, Berlin, 1985.

\bibitem[J1]{Jannelli1989}
E.~Jannelli,
     \emph{On the~symmetrization of~the principal symbol
        of~hyperbolic equations},
     Comm. Part. Diff. Equat. \textbf{14} (1989), 1617--1634.

\bibitem[J2]{Jannelli2009}
E.~Jannelli,
     \emph{The hyperbolic symmetrizer: theory and applications},
     Advances in phase space analysis of~partial differential equations, pp. 113--139,
     Progr. Nonlinear Differential Equations Appl., 78, Birkh\"{a}user Boston, Inc., Boston, MA, 2009.

\bibitem[JT1]{JannelliTaglialatela-2011}
E.~Jannelli and G.~Taglialatela,
\emph{Homogeneous weakly hyperbolic equations
   with time dependent analytic coefficients},
   J.~Diff.~Eq.~\textbf{251} (2011), 995--1029.

\bibitem[JT2]{JannelliTaglialatela-2014}
E.~Jannelli and G.~Taglialatela,
\emph{Third order homogeneous weakly hyperbolic equations
   with nonanalytic coefficients},
   J. Math. Anal. Appl. 418 (2014), no. 2, 1006--1029.

\bibitem[KS]{KinoshitaSpagnolo}
T. Kinoshita and S. Spagnolo,
     \emph{Hyperbolic equations with non analytic coefficients},
     Math. Ann.~\textbf{336} (2006), 551--569.

\bibitem[M]{Mandai-1985}
T.~Mandai,
     \emph{Smoothness of~roots of~hyperbolic polynomials
        with respect to~one-dimensional parameter},
     Bull. Fac. Gen. Ed. Gifu Univ. \textbf{21} (1985), 115--118.

\bibitem[N1]{Nishitani2020}
T.~Nishitani,
   \emph{Notes on symmetrization by B\'{e}zoutian},
   Boll.~U.M.I. \textbf{13} (2020), 417--428.

\bibitem[N2]{Nishitani2021}
T.~Nishitani,
   \emph{Diagonal symmetrizers for hyperbolic operators with triple characteristics},
   Math. Ann. (2021).

\bibitem[NP1]{NishitaniPetkov1}
T.~Nishitani and V.~Petkov,
   \emph{Cauchy problem for effectively hyperbolic operators with triple characteristics},
   J. Math. Pures Appl. \textbf{123} (2019), 201--228.

\bibitem[NP2]{NishitaniPetkov2}
T.~Nishitani and V.~Petkov,
   \emph{Cauchy problem for hyperbolic operators with triple effective characteristics
         on the initial plane},
   Osaka J. Math. \textbf{57} (2020), 597--615.

\bibitem[N]{Nuij}
W.~Nuij,
     \emph{A note on hyperbolic polynomials},
     Math. Scand. \textbf{23} (1968), 69--72.

\bibitem[P1]{Peyser-1963}
G.~Peyser,
\emph{Energy inequalities for hyperbolic equations
   in several variables with multiple characteristics
   and constant coefficients},
   Trans. Amer. Math. Soc. \textbf{108} (1963), 478--490.

\bibitem[P2]{Peyser-1967}
G.~Peyser,
\emph{On the~roots of~the derivative of~a polynomial
   with real roots},
   Amer. Math. Monthly \textbf{74} (1967), 1102--1104.

\bibitem[Sp]{Spagnolo-2005}
S.~Spagnolo,
     \emph{Hyperbolic systems well-posed
        in all Gevrey classes},
     in Geometric Analysis of~PDE
     and Several Complex Variables
     -- Dedicated to~Fran\c{c}ois Treves,
     S.~Chanillo et al. (Ed.s),
     Contemp. Math. \textbf{368} (2005), 405--414.

\bibitem[ST1]{ST-JHDE}
S.~Spagnolo and G.~Taglialatela,
   \emph{Homogeneous hyperbolic equations
      with coefficients depending on one space variable},
   Journal of~Hyperbolic Diff.~Eq., \textbf{4} (2007), 533--553.
   Erratum 797--797.

\bibitem[ST2]{ST-CPDE}
S.~Spagnolo and G.~Taglialatela,
   \emph{Analytic propagation for nonlinear weakly hyperbolic systems}, 
   Comm. Partial Differential Equations \textbf{35} (2010), 2123--2163. 

\bibitem[Sv]{Svensson}
S. L. Svensson,
  \emph{Necessary and sufficient conditions
     for the~hyperbolicity of~polynomials with
     hyperbolic principal part},
  Ark. Mat. \textbf{8} (1970), 145--162.

\bibitem[Tag]{T-2013}
G.~Taglialatela,
   \emph{Quasi-symmetrizer and Hyperbolic Equations}.
   In: Reissig M., Ruzhansky M. (eds)
   Progress in Partial Differential Equations.
   Springer Proceedings in Mathematics \&~Statistics, vol.~44 (2013).
   Springer, Heidelberg.

\bibitem[Tar]{Tarama-2006}
S.~Tarama,
     \emph{Note on the~Bronshtein theorem concerning
        hyperbolic polynomials},
     Sci. Math. Jpn.  \textbf{63} (2006), 247--285.

\bibitem[W1]{Wakabayashi-1980}
S.~Wakabayashi,
    \emph{The Cauchy problem for operators
       with constant coefficient hyperbolic principal part
       and propagation of~singularities},
    Japan. J. Math. \textbf{6} (1980), 179--228.

\bibitem[W2]{Wakabayashi-1986}
S.~Wakabayashi,
     \emph{Remarks on hyperbolic polynomials},
     Tsukuba J.~Math. \textbf{10} (1986), 17--28.
\end{thebibliography}
\end{document}